\newcommand{\egon}{{\rm null}}
\newcommand{\egono}{{\rm null_0}}
\newcommand{\cri}{{\sf cr}}
\newcommand{\crl}{{\sf crl}}
\newcommand{\crim}{{\sf crl}\left(M,g\right)}
\newcommand{\crimp}{{\sf crl}_p\left(M,g\right)}
\newcommand{\cris}{{\sf crl}\left(S^n,g\right)}
\newcommand{\crisp}{{\sf crl}_p\left(S^n,g\right)}
\newcommand{\beq}{\begin{equation}}
\newcommand{\beqn}{\begin{equation*}}
\newcommand{\diam}{{\rm diam}}
\newcommand{\eeq}{\end{equation}}
\newcommand{\eeqn}{\end{equation*}}
\newcommand{\field}{\mathbb{Z}_s}
\newcommand{\grad}{{\rm grad}}
\newcommand{\homo}{{\zeta}}
\newcommand{\indo}{{\rm ind_0}}
\newcommand{\ind}{{\rm ind}}
\newcommand{\inj}{{\rm inj}}
\newcommand{\n}{\mathbb{N}}
\newcommand{\rad}{{\rm rad}}
\newcommand{\R}{\mathbb{R}}
\newcommand{\lcg}{{\mathcal L}}
\newcommand{\lglp}{{\mathcal L}(p)}
\newcommand{\z}{\mathbb{Z}}
\newtheorem{theorem}{Theorem}
\newtheorem{lemma}{Lemma}
\newtheorem{corollary}{Corollary}
\newtheorem{definition}{Definition}
\newtheorem{example}{Example}
\newtheorem{remark}{Remark}
\newtheorem{proposition}{Proposition}
\title[Critical values and positive curvature]{Critical 
values of homology classes of loops and 
positive curvature}
\author{Hans-Bert Rademacher}
\address{Mathematisches Institut, Universit{\"a}t Leipzig,
04081 Leipzig, Germany}
\email{rademacher@math.uni-leipzig.de}
\date{2017-10-27}
\begin{document}
\begin{abstract}
We study compact and
simply-connected Riemannian manifolds $(M,g)$
with
positive sectional curvature $K\ge 1.$
For a non-trivial homology class
of lowest 
positive dimension 
in the space of loops based at a
point $p \in M$ or in the free loop space
one can define a critical length
$\crimp$ resp. $\crim.$ Then
$\crimp$ equals the length of a geodesic loop
with base point $p$
and $\crim$ equals the length of a closed
geodesic.
This is the idea of the 
proof of the existence of a 
closed geodesic of positive length
presented by Birkhoff
in case of a sphere and by Lusternik \&
Fet in the general case. 
It is the main result of the paper that
the numbers $\crimp$ resp. $\crim$
attain its maximal value $2\pi$ only
for the round metric on the $n$-sphere.

Under the additional assumption $K \le 4$ this result for $\crim$ follows from
results by Sugimoto in even dimensions
and Ballmann, Thorbergsson \& Ziller in
odd dimensions. 
\end{abstract}
\keywords{geodesic loops, loop space, free loop space, 
Morse theory, positive sectional curvature}
\subjclass[2010]{53C20, 53C21, 53C22, 53C24, 58E10}
\maketitle
%\baselineskip 22pt
%%%%%%%%%%%%%%%%%%%%%%%%%%%%%%%%%%%%%%%%%%%%%%%%%%%%%%%%%%%%%%%%%%%%%%%
%%%%%%%%%%%%%%%%%%%%%%%%%%%%%%%%%%%%%%%%%%%%%%%%%%%%%%%%%%%%%%%%%%%%%%
\section{Introduction}
\label{sec:introduction}

For  a compact Riemannian manifold $(M,g)$
let 
$\mathcal{M}=\{ \gamma: [0,1]\longrightarrow M\,;\,
\gamma \mbox{ absolutely continuous },
\int_0^1 \|\gamma'\|^2(t)\,dt<\infty\}$
be the manifold of paths ($H^1$-paths) on $M.$ We consider in
the sequel the following subspaces:
the {\em free loop space}
$\Lambda = \Lambda M= \left\{\gamma \in \mathcal{M}; \gamma(0)=\gamma(1)\right\}$
and for $p,q \in M$
the space of paths between $p$ and $q:$
$\Omega_{pq}=\Omega_{pq}M=
\left\{\gamma \in \mathcal{M}; \gamma(0)=p, \gamma(1)=q\right\}.$
In particular for $p=q$ one obtains the 
{\em (based) loop space}  $\Omega_p M=\Omega_{pp}M,$
 cf.~\cite[ch.2.3]{Klingenberg1995}.
Now we use $X$ to denote one of the spaces $\Lambda M, \Omega_{pq}M, \Omega_pM.$
We consider the energy functional $E: X \longrightarrow \R:$
\begin{equation}
 \label{eq:definition-energy}
 E\left(\gamma\right)=\frac{1}{2}\int_0^1 
\|\gamma'(t)\|^2(t)\,dt\,.
 \end{equation}
 This is a differentiable function, the critical points are
 closed geodescis (if $X=\Lambda M$), geodesics joining $p$ and $q$ (if
 $X=\Omega_{pq}M$) and geodesic loops based at $p$ (if
  $X=\Omega_{p}M$). 
 
  For a critical point $c \in X$ we have $E(c)=L^2(c)/2,$ here
  $L(c)=\int_0^1\|c'(t)\| \, dt$ is the {\em length} of $c.$
We use the following notation
for sublevel sets:
  $X^{\le a}:=
  \left\{\gamma \in X\,;\,E(\gamma)\le a\right\}$ resp. $
  X^{<a}:=
  \left\{\gamma \in X\,;\,E(\gamma)< a\right\}$
  or $X^{a}:=
  \left\{\gamma \in X\,;\right.$ 
	$\left.\,E(\gamma)= a\right\}.$
  
  The {\em critical value} (or {\em critical level})
$\cri(h)$ of a relative homology class
$h \in H_k\left(X,X^{\le b},\field\right)$ is defined as follows:
% % % % % % % % % % % % % % % % % % % % % % %
\begin{equation}
 \label{eq:critical-value}
\cri(h)= \inf \left\{a \ge b\,;\,h \in {\rm Image}
\left(H_{k}(X^{\le a}, X^{\le b};\field)\longrightarrow 
H_{k}(X, X^{\le b};\field)
\right)
\right\}\,,
\end{equation}
% % % % % % % % % % % % % % % % % % % % % % % %
cf. \cite[ch.2.1]{Klingenberg1978} or \cite[Sec.3]{Hingston2013}.
Then $h \in H_k\left(X, X^{\le b};\field\right)$ is nontrivial
if and only if $\cri(h)>b.$
For a non-trivial class $h \in H_k\left(X;\field\right)$
there is a critical point $c \in X$ with $E(c)=\cri(h)$
resp. $L(c)=\sqrt{2 E(c)}.$

For a compact and simply-connected manifold $M=M^n$ of dimension $n$ 
let $k=k(M) \in \{1,2,\ldots,n-1\}$ be the unique number, such that 
$M$ is $k$-connected, but not $(k+1)$-connected. Hence
we have the following statement about the homotopy groups:
$\pi_j(M)=0$ for $1\le j\le k$ and 
$\pi_{k+1}(M)\not=0\,.$ 
Then for $X=\Omega_{pq}M$ resp.
$X=\Lambda M$ there is a smallest prime number 
$s \in \mathbb{P}$ such that 
$H_{k}\left(X, X^0;\field\right)\not=0.$ 
This follows since
$\pi_j\left(\Omega_{p}M\right)
\cong \pi_{j+1}\left(M\right) $
resp. $\pi_j\left(\Lambda M\right)\cong 
\pi_{j+1}\left(M\right) \oplus \pi_j\left(M\right),$
cf.~\cite[Cor. 2.1.5]{Klingenberg1978}.
We define the 
{\em critical length} $\crim,$ 
resp. the {\em critical length $\crimp $
at $p$} as follows:
%%%%%%%%%%%%%%%%%%%%%%%%%%%%%%%%%%%%%%%%%%%%%%%%%%%%%%%%%
\begin{equation}
\label{eq:critical-value-manifold}
\crim:=
\max\left\{\sqrt{2 \cri(h)}\,;\, h \in 
H_{k(M)}\left(\Lambda M, \Lambda^0 M;\field\right)
\right\}
\end{equation}
%%%%%%%%%%%%%%%%%%%%%%%%%%%%%%%%%%%%%%%%%%%%%%%%%%%%%
and 
%%%%%%%%%%%%%%%%%%%%%%%%%%%%%%%%%%%%%%%%%%%%%%%%%%%%%
\begin{equation}
\label{eq:critical-value-manifold-p}
\crimp :=
\max\left\{\sqrt{2\cri(h)}\,;\, h \in 
H_{k(M)}
\left(\Omega_p M;\field\right)\right\}\,.
\end{equation}
%%%%%%%%%%%%%%%%%%%%%%%%%%%%%%%%%%%%%%%%%%%%%%%%%%%%%%%%%%
If the manifold $M$ is  simply-connected and
homeomorphic to a sphere $S^n$ then there is a uniquely determined generator
$ h \in H_{n-1}\left(\Lambda M,
% % % % % % % % % % % %
% S^n replaced by M, 08-04
% % % % % % % % % % % % %
\Lambda^0 M;\z_2\right)\cong \z_2,$
resp. $h_p \in H_{n-1}\left(\Omega_pM;\z_2\right)
\cong \z_2$ such that $\crim=\cri (h),$ resp. 
$\crimp=\cri (h_p).$
%%%%%%%%%%%%%%%%%%%%%%%%%%%%%%%%%%%%%%%%%%%%%%%%%%%%%%%%%%%%
For a metric of positive sectional curvature
we obtain the following result for the critical
length resp. the critical length at a point. In
particular we show that the maximal possible
value is only attained for the round metric
on a sphere of constant sectional curvature $1:$
%%%%%%%%%%%%%%%%%%%%%%%%%%%%%%%%%%%%%%%%%%%%%%%%%%%%
\begin{theorem}
\label{thm:one}
Let $\left(M^n,g\right)$ be a 
compact and simply--connected Riemannian
manifold of
positive sectional curvature $K \ge 1.$
%and $p \in M.$
Then the critical lengths $\crim$ and $\crimp$ for
$p \in M$
satisfy:

\smallskip

(a) 
For all $p \in M:$ $\crimp\le 2 \pi$
and $\crim \le 2\pi.$

\smallskip

(b) If for some $p \in M:$
$\crimp > \pi$ 
or if $\crim >\pi$ 
then the manifold is homeomorphic to
the $n$-dimensional sphere $S^n.$

\smallskip

(b) If $\crimp= 2 \pi$
for some $p \in M,$ 
or if $\crim =2\pi$ then $(M,g)$ is isometric to
the round sphere of constant sectional curvature $1.$
\end{theorem}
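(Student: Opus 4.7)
The plan is to deduce all three parts from a Morse-theoretic analysis of the energy $E$ on $\Omega_pM$ or $\Lambda M$, using Rauch's comparison under $K\ge 1$ to bound the Morse index of the extremal geodesics, the generalised Poincar\'e theorem for part~(b), and the equality case of Rauch for part~(c). Throughout, let $h$ be a nontrivial class of dimension $k=k(M)$ computing $\crim$ or $\crimp$, and let $c$ be a critical point of $E$ realising $\cri(h)$, of length $L(c)=\sqrt{2\cri(h)}$; by the standard minmax principle together with the Palais--Smale condition for $E$ one may assume
\[
\ind(c)\;\le\;k\;\le\;\ind(c)+\egon(c),
\]
and by definition $k\le n-1$.

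For part~(a), Rauch's comparison along $c$ shows that each parallel orthonormal direction produces a scalar Jacobi inequality dominated by the round metric of curvature~$1$, so a zero of the corresponding solution appears within every parameter interval of length~$\pi$. Combined with the Morse index theorem for the $n-1$ transverse directions (Dirichlet or periodic boundary conditions, with the usual Klingenberg accounting for holonomy), this yields $\ind(c)\ge 2(n-1)$ whenever $L(c)>2\pi$, which exceeds $n-1\ge k$ and contradicts the minmax bound. Hence $L(c)\le 2\pi$ and $\crim,\crimp\le 2\pi$.

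For part~(b), suppose $L(c)>\pi$. In the based-loop case $X=\Omega_pM$, Berger's explicit test function $f(t)=\sin(\pi t/L)\,E_i(t)$ along each parallel orthonormal $E_i\perp c'$ produces $I(fE_i,fE_i)\le \pi^2/(2L)-L/2<0$, giving $n-1$ linearly independent negative directions and thus $\ind(c)\ge n-1$; the analogous bound for closed geodesics follows from the same Morse comparison after careful handling of the holonomy spectrum along $c$. In either case $\ind(c)+\egon(c)\ge n-1$, which with the minmax bound forces $k=n-1$, so $M$ is $(n-1)$-connected. Hurewicz, Whitehead, and the generalised Poincar\'e theorem then give a homeomorphism $M\cong S^n$.

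For part~(c), if $L(c)=2\pi$ one is in the equality case of Rauch's comparison used in part~(a): every Jacobi field entering the index form must coincide with the corresponding round-sphere Jacobi field, so all sectional curvatures of $2$-planes containing $c'(t)$ equal~$1$ along $c$. To promote this to $K\equiv 1$ on $M$ I would exploit the $(n-1)$-parameter family of critical loops supplied by a cycle representing $h$ at the extremal level $\cri(h)=2\pi^2$: every loop in the essential part of the family must itself be a critical geodesic of length $2\pi$ satisfying the same equality case, and the family sweeps out $M$, yielding $K\equiv 1$ throughout. Since $M$ is complete and simply connected, $(M,g)$ is then isometric to the round sphere. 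The main obstacle will be precisely this rigidity step in~(c): the equality case in Rauch supplies curvature information only along the single geodesic $c$, and propagating it to all of $M$ requires genuinely extracting from the homology cycle a continuous family of extremal critical loops whose union fills the manifold.
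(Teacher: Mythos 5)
Your treatment of parts (a) and (b) follows essentially the same strategy as the paper: bound the index of a minimax critical point via Rauch comparison under $K\ge 1$, then invoke the generalized Poincar\'e theorem for (b). The paper is slightly more careful on one technical point: rather than applying the inequality $\indo(c)\le k\le\indo(c)+\egono(c)$ directly on $\Omega_pM$, where $E$ need not be a Morse function, it passes to a sequence of regular values $p_j\to p$ of $\exp_p$ so that $E$ on $\Omega_{pp_j}M$ is Morse and the minimax critical point has index exactly $k$, and then recovers the statement for $\Omega_pM$ using the continuity of critical values under the endpoint perturbation established in Lemma~\ref{lem:homo}. Your argument would need the same precaution to handle possibly degenerate critical points, but the underlying ideas coincide.

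In part (c) you have correctly isolated the genuine difficulty, and the route you sketch does not close it. Equality in Rauch along the extremal loop $c$ controls only the sectional curvatures of $2$-planes containing $c'(t)$ along that one geodesic; a cycle representing $h_p$ at level $2\pi^2$ need not consist of geodesics away from its top stratum, so the assertion that ``every loop in the essential part of the family must itself be a critical geodesic of length $2\pi$'' is unsupported, and without it the family cannot propagate curvature rigidity across $M$. The paper sidesteps this local-to-global problem entirely and argues by contradiction: if $K$ is not constant, Toponogov's maximal diameter theorem gives $\diam(M,g)<\pi$, so the midpoint $q=c(1/2)$ satisfies $d(p,q)<\pi$; Lemma~\ref{lem:path} --- whose proof is a delicate analysis of the set of vectors in $T_q^\pi M$ exponentiating to $p$ and of the threshold $\eta_y$ governing when the ``negative cycle'' $f_{c_y}$ from Definition~\ref{def:negative-cycle} stays at the top level --- then produces a path of loops through $c$ along which energy strictly drops, and Lemma~\ref{lem:crifc} converts this into a homotopy of $f_c$ below level $2\pi^2$, contradicting $\cri\left(\left[f_c\right]\right)=2\pi$. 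That homotopy argument, rather than a Rauch-rigidity sweep, is the essential content of the rigidity part.
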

%%%%%%%%%%%%%%%%%%%%%%%%%%%%%%%%%%%%%%%%%%%%%%%%%%%%%%%%%%%%%%%%%%%
\begin{remark}\rm
\label{rem:diameter}
(a) Note that the conclusions of the Theorem remain true if we
replace $\crim$ by $ 2\, \diam (M,g),$ where $\diam (M,g)$ is the
{\em diameter} of the Riemannian manifold.
Then Part (a) is Bonnet's theorem, cf.~\cite[Thm.1.31]{Cheeger1975},
Part (b) is the generalized sphere theorem~\cite{Grove1977}
by Grove \& Shiohama 
and Part (c) is Toponogov's maximal diameter 
theorem~\cite[Thm. 6.5]{Cheeger1975},
resp.~\cite{Toponogov1958}.
But one cannot reduce the proof of the Theorem to the
statements for the diameter as examples
constructed by
Balacheff, Croke \& Katz~\cite{Balacheff2009}
show:
There is a one-parameter family $g_t,t\ge 0$ of smooth
Zoll metrics on the $2$-sphere such that the following
holds: The metric $g_0$ is the round metric of constant
curvature $1$ and for
$t>0$ 
the length $
\lcg (S^2,g_t)$ of a shortest
closed geodesic satisfies $\lcg (S^2,g_t) > 2 \,\diam (S^2,g_t).$
%This construction is due to 
%Balacheff, Croke \& Katz~\cite{Balacheff2009}.

(b) Since $\crim>0$ one obtains the existence
of a non-trivial closed geodesic
of length $\crim,$ resp.
a geodesic loop of length $\crimp.$ This idea
goes back to Birkhoff~\cite{Birkhoff1917} in the case of a sphere and
to Lyusternik-Fet~\cite{Lyusternik1951} in the general case.
Therefore we call $\crim$ also the {\em Birkhoff-length} of the
Riemannian manifold.

(c) Since for the number $k=k(M)$ we have
$\pi_k\left(\Lambda M, \Lambda^0 M\right)  
\cong \pi_k\left(\Omega_pM\right)\cong
\pi_{k+1}(M)$ and since for any $p \in M$
the based loop space $\Omega_pM$ can be seen as
a subspace of the free loop space $\Lambda M$
we obtain for all $p \in M:$
\begin{equation}
\label{eq:crlp}
\crim \le \crimp.
\end{equation}
In Example~\ref{exa:convex} we construct 
a one-parameter family $(S^2,g_r), 0<r\le 1$
of convex surfaces  of revolution with
$\crl_p (S^2,g_r)\ge \pi$ for all $p \in S^2,$
but $\lim_{r\to 0}\crl (S^2,g_r)=0.$
%$\cris < \inf\{\crisp \,;\, p \in S^2\},$ cf. %Section~\ref{sec:morse}. 
\end{remark}
%%%%%%%%%%%%%%%%%%%%%%%%%%%%%%%%%%%%%%%%%%%%%%%%%%%%%%%%%%%%%
For a compact Riemannian manifold $(M,g)$ 
we denote by $\lcg=\lcg(M,g)$ the length of a shortest (non-trivial) closed geodesic.
For a point $p \in M$ 
we denote by
$\lglp$ the length of a shortest (non-trivial) geodesic
loop with initial point $p.$
Then we have the obvious estimates:
$\lcg\le \crim$ and $\lglp \le \crimp.$ 
In particular we obtain as 
%%%%%%%%%%%%%%%%%%%%%%%%%%%%%%%%%%%%%%%%%%%%%%%%%%%%%%%%%%%%%%%%
\begin{corollary}
Let $\left(M^n,g\right)$ be a simply--connected Riemannian
manifold of
positive sectional curvature $K \ge 1$
and $p \in M.$

(a) $\lcg \le 2\pi,$ and $\lglp \le 2\pi$ for all $p \in M.$

(b) If $\lcg =2\pi$ or if $\lglp = 2\pi $ for some
$p \in M$ then the manifold is isometric to the 
sphere of constant curvature $1.$
\end{corollary}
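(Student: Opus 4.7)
The plan is to derive the Corollary as a formal consequence of Theorem~\ref{thm:one} together with the elementary comparisons $\lcg \le \crim$ and $\lglp \le \crimp$ already recorded in the discussion preceding its statement. No new geometric input is required: the depth of the result sits entirely in Theorem~\ref{thm:one}. Note first that the hypotheses $K\ge 1$ and simple connectedness force $(M,g)$ to be compact by Bonnet--Myers, so Theorem~\ref{thm:one} does apply.

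For part (a), I would invoke the existence principle recorded in the paragraph after \eqref{eq:critical-value}: because $E$ is $C^1$, bounded below, and satisfies the Palais--Smale condition on $\Lambda M$ and on $\Omega_p M$, any non-trivial class $h$ realising the maximum in \eqref{eq:critical-value-manifold} (resp.\ \eqref{eq:critical-value-manifold-p}) is detected by a critical point $c$ with $E(c)=\cri(h)$. Such a $c$ is a non-trivial closed geodesic of length $\crim$ (resp.\ a non-trivial geodesic loop based at $p$ of length $\crimp$). By the very definitions of $\lcg$ and $\lglp$ this gives
\[
\lcg \le \crim \qquad\text{and}\qquad \lglp \le \crimp,
\]
and combining with Theorem~\ref{thm:one}(a) yields $\lcg \le 2\pi$ and $\lglp \le 2\pi$.

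For part (b), suppose $\lcg = 2\pi$. Then the sandwich $2\pi = \lcg \le \crim \le 2\pi$ forces $\crim = 2\pi$, and the rigidity statement of Theorem~\ref{thm:one} identifies $(M,g)$ with the round sphere of constant sectional curvature $1$. The case $\lglp = 2\pi$ for some $p\in M$ is completely symmetric: one uses $\lglp \le \crimp$ to conclude $\crimp = 2\pi$, then invokes the rigidity statement applied to $\crimp$.

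I do not foresee any genuine obstacle. The only point that deserves to be stated explicitly rather than taken for granted is the passage from the homological maximum in \eqref{eq:critical-value-manifold}--\eqref{eq:critical-value-manifold-p} to an honest critical point, but this is exactly the standard Lusternik--Schnirelmann principle already quoted in the paragraph after \eqref{eq:critical-value}, so the Corollary follows with no further work.
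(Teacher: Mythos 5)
Your argument is correct and follows exactly the route the paper intends: the Corollary is stated immediately after the remark that $\lcg\le\crim$ and $\lglp\le\crimp$, so the deduction from Theorem~\ref{thm:one} via this sandwich (together with the existence of a critical point realising the critical value, which makes the inequalities meaningful) is precisely the paper's implicit proof. Your added observations --- compactness from Bonnet--Myers and the Lusternik--Schnirelmann/Palais--Smale justification that the critical value is attained by a non-constant geodesic --- are correct and merely make explicit what the paper leaves to the reader.
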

%%%%%%%%%%%%%%%%%%%%%%%%%%%%%%%%%%%%%%%%%%%%%%%%%%%%%%%%%%%%%%%%
Toponogov proves in~\cite{Toponogov1959}
that a {\em simple} closed geodesic (i.e. 
a closed geodesic with no self-intersection)
on a convex surface with $K\ge 1$ has length $\le 2\pi$
with equality if and only if the sectional curvature is constant.
He also shows the analogous statement for a geodesic loop.
Statement (b) for $\lcg$ is claimed in several 
% % % % % % % % % % % % % % % % % % % %
% (The) statement, 08-04
% % % % % % % % % % % % % % % % % % % % %
preprints (unpublished) by Itokawa \& 
Kobayashi~\cite{Itokawa1991},
\cite{Itokawa2005}, \cite{Itokawa2008}. 
%%%%%%%%%%%%%%%%%%%%%%%%%%%%%%%%%%%%%%%%%%%%%%%%%%%%
In case of quaterly pinched metrics there are 
stronger results about a gap in the length
spectrum obtained using the 
Toponogov triangle comparison results:
%%%%%%%%%%%%%
\begin{theorem}
 \label{thm:Ballmann-Habil}
Let $M$ be a manifold homeomorphic to $S^n$ carrying  a Riemannian
 metric $g$ with sectional curvature $1 \le K \le \Delta \le 4.$
 
 (a) {\rm (Ballmann~\cite[Teil III]{Ballmann-Habil})}
 The critical length $\crim$ 
%of $(M,g)$ 
and the length $\lcg$
of a shortest closed geodesic coincide and satisfy:
$ 2 \pi/\sqrt{\Delta} \le \crim=\lcg \le 2\pi$.

(b) {\rm (Ballmann,Thorbersson \& 
Ziller~\cite[Thm.1.7]{BTZ1983c})}
If there is a closed geodesic $c$ with length
$2\pi \le L(c)\le 4\pi/\sqrt{\Delta}$
then the metric has constant sectional
curvature.
\end{theorem}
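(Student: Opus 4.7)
The plan is to treat part (a) in three separable steps, then move to part (b). The upper bound $\crim \le 2\pi$ is immediate from Theorem~\ref{thm:one}(a). For the lower bound $\lcg \ge 2\pi/\sqrt{\Delta}$, I would invoke Klingenberg's classical injectivity-radius estimate for simply-connected quarter-pinched manifolds: the hypothesis $1 \le K \le \Delta \le 4$ means the manifold is $1/\Delta$-pinched with $1/\Delta \ge 1/4$, so $\inj(M,g) \ge \pi/\sqrt{\Delta}$. Since a non-trivial closed geodesic has length at least $2\,\inj(M,g)$, this gives the lower bound. Together with the obvious chain $\lcg \le \crim \le 2\pi$, both quantities are then confined to $[2\pi/\sqrt{\Delta}, 2\pi]$.

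The main content of (a) is the equality $\crim = \lcg$. My plan is to apply Morse theory on $\Lambda M$ together with the $S^1$-action. Morse--Schoenberg index comparison, applied with $K \ge 1$ on one side and $K \le \Delta \le 4$ on the other, pins down the Morse index of a closed geodesic in a narrow range as a function of its length: any closed geodesic of length $L \le 2\pi$ has index at most $n-1$, and for the shortest geodesic $c_0$ (whose length lies in $[2\pi/\sqrt{\Delta}, 2\pi]$) the nullity contributed by the $S^1$-orbit equals exactly $n-1$. Thus $c_0$ is a non-degenerate critical $S^1$-orbit of the right Morse--Bott type to carry the distinguished generator of $H_{n-1}(\Lambda M, \Lambda^0 M; \mathbb{Z}_s)$. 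A standard Morse--Bott argument (essentially Klingenberg's analysis of the loop space under pinching) then identifies the minimax critical level with $L(c_0)^2/2$, yielding $\crim = L(c_0) = \lcg$.

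For part (b) I would exploit Toponogov's triangle comparison and its rigidity statement. Given a closed geodesic $c$ with $L(c) \ge 2\pi$, pick $p \in c$ and let $q$ be the midpoint of $c$. Since $c$ is locally length-minimizing on arcs of length at most $\pi$, the intrinsic distance $d(p,q)$ equals at least $\pi$; combined with Bonnet--Myers ($\diam(M,g) \le \pi$) this forces $d(p,q) = \pi$ and $L(c) = 2\pi$ exactly. The maximum diameter is realized and Toponogov's maximal diameter theorem then produces an isometry to the round unit sphere. The upper constraint $L(c) \le 4\pi/\sqrt{\Delta}$ enters to exclude multiply-covered configurations in which $c$ revisits an arc without carrying a genuine antipodal pair.

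The hard step will be the equality $\crim = \lcg$ in (a). The delicate point is ruling out that a longer closed geodesic, with smaller than expected index due to near-degenerate Jacobi fields, represents the minimax class first. Here the $1/4$-pinching is essential: it forces a clean separation between the short closed geodesics (which have the correct index $0$ and nullity $n-1$) and iterates or longer geodesics (whose index jumps to at least $n$, so that they cannot represent an $(n-1)$-dimensional homology class). Controlling this separation rigorously, especially across iterates where local homology contributions must be tracked via Gromoll--Meyer, is where the main effort lies.
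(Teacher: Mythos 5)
The paper does not prove this theorem; it is stated as a cited result, with part (a) attributed to Ballmann's Habilitationsschrift and part (b) to Ballmann, Thorbergsson \& Ziller (with the following Remark adding Sugimoto for even-dimensional (b) and recording the stronger ``hanging'' statement from Ballmann's thesis). So there is no in-paper proof to compare against, and I review your sketch on its own.

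For (a), the reduction $\crim\le 2\pi$ via Theorem~\ref{thm:one}(a) and the injectivity-radius bound $\inj\ge\pi/\sqrt{\Delta}$ (Klingenberg for even dimensions, Cheeger--Gromoll for the weakly $1/4$-pinched odd case) giving $\lcg\ge 2\pi/\sqrt{\Delta}$ are fine. But your index bookkeeping is off. Morse--Schoenberg with $K\le\Delta$ bounds the $\Omega$-index of a geodesic of length $L$ from above by $(n-1)\cdot\#\{k\ge1:k\pi/\sqrt{\Delta}<L\}$; with $\Delta=4$ and $L$ up to $2\pi$ this allows index up to $3(n-1)$, not $n-1$ as you assert. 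Also, the phrase ``the nullity contributed by the $S^1$-orbit equals exactly $n-1$'' is not meaningful: the $S^1$-orbit contributes exactly one periodic Jacobi field, namely $c'$, to the nullity. The real content -- that the generator of $H_{n-1}(\Lambda M,\Lambda^0 M;\z_2)$ remains hanging at \emph{every} shortest closed geodesic, so that $\crim$ cannot exceed $\lcg$ -- is exactly what Ballmann proves via the map $G:(D^{n-1},\partial D^{n-1})\to(\{c\}\cup\Lambda^{<\lcg^2/2}M,\Lambda^{<\lcg^2/2}M)$ recorded in the Remark, and your sketch does not reconstruct that argument.

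For (b), there is a genuine gap. You take $p\in c$, let $q$ be the midpoint of $c$, and deduce $d(p,q)\ge\pi$ ``since $c$ is locally length-minimizing on arcs of length at most $\pi$.'' Local length-minimization of a geodesic does not bound the ambient distance between its endpoints from below; the half-arc has length $L(c)/2\ge\pi$, which already exceeds twice the injectivity-radius lower bound $\pi/\sqrt{\Delta}$, so it need not realize $d(p,q)$, and in fact $d(p,q)$ could a priori be strictly less than $\pi$. Your plan then collapses to ``$\diam=\pi$, apply Toponogov maximal diameter,'' but the premise $\diam=\pi$ was never established. The BTZ proof is a genuine Toponogov triangle-comparison argument; the appeal to the maximal-diameter theorem alone, keyed to the endpoints of a half-arc, does not suffice.
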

%%%%%%%%%%%%%%%%%%%%%%%%%%%%%%%%%%%%%%%%%%%%%%%%%%%%%%%%%%%%%%%
%%%%%%%%%%%%%%%%%%%%%%%%%%%%%%%%%%%%%%%%%%%%%%%%%%%%
\begin{remark}
\rm
(a) The case $n=2$ in Part (a) is also contained
in~\cite[Thm. 4.2]{BTZ1983c}.
For even dimension Part (b) was shown
by Sugimoto~\cite[Thm. B,C]{Sugimoto1970}. 
compare also the Remark 
following~\cite[Thm.1.7]{BTZ1983c}.

(b) In~\cite[Teil III]{Ballmann-Habil}
the following stronger statement is shown:
For any
shortest closed geodesic $c$ there is a homotopically non-trivial
map \\$G: \left(D^{n-1},\partial D^{n-1}\right)
\longrightarrow \left(\{c\}
\cup \Lambda^{< \lcg^2/2}M,
\Lambda^{< \lcg^2/2}M\right).$
Hence the generator $h \in 
H_{n-1}\left(\Lambda M, \Lambda^0M;\z_2\right)$
{\em remains hanging} at any shortest closed geodesic.

(c)
In~\cite{BTZ1983a} the existence of 
$g(n)$ geometrically distinct closed geodesics 
on $S^n$ is shown
under the assumptions of 
Theorem~\ref{thm:Ballmann-Habil}.
The lengths of these closed geodesics 
lie in the interval 
$\left[2\pi/\sqrt{\Delta}, 2\pi\right]$
and $g(n)$ is the cup-length of the 
Grassmannian $G_2(\R^{n+1})$ of unoriented
$2$-planes in $\R^{n+1}.$
Instead of an assumption about the sectional
curvature, which is an assumption depending
continously on the metric and its first
and second derivatives one can also
investigate assumptions depending continuously
only on the metric. For example Ballmann,
Thorbergsson \& Ziller consider 
in~\cite[Sec.3]{BTZ1983a}
the so-called {\em Morse-condition}.
Hingston shows in~\cite[Sec.5]{Hingston1984}
a similar result about the existence of
$g(n)$ closed geodesics.
These results motivate 
Proposition~\ref{pro:metric}, where we 
consider Riemannian metrics $g$ on $S^n$ satisfying
$g\le g_1.$  Here 
$g_1$ is the round metric of constant curvature
$1.$
\end{remark}
%%%%%%%%%%%%%%%%%%%%%%%%%%%%%%%%%%%%%%%%%%%%%%%%%%%%%%%%%%%%%
%%%%%%%%%%%%%%%%%%%%%%%%%%%%%%%%%%%%%%%%%%%%%%%%%%%%%%%%%%%%%
%%%%%%%%%%%%%%%%%%%%%%%%%%%%%%%%%%%%%%%%%%%%%%%%%%%%%%%%%%%%%
\section{Rauch comparison result}
\label{sec:rauch}
%%%%%%%%%%%%%%%%%%%%%%%%%%%%%%%%%%%%%%%%%%%%%%%%%%%%%%%%%%%%
Let $c:\R\longrightarrow M$ be a geodesic
parametrized by arc length, then 
$c(t_1)$ is called {\em focal point} of $p=c(0)$ 
(along $c\left|[0,t_1]\right.$) if 
there is a non-trivial normal Jacobi field $Y=Y(t)$ along $c$ with 
$\langle Y(t),c'(t)\rangle=0$ 
for all $t$ 
and 
vanishing covariant derivative
$\nabla Y(0)/dt=0$ (along the curve)
and $Y(t_1)=0,$
cf.~\cite[Def. 1.12.20]{Klingenberg1995}. The {\em focal radius}
$\rho=\rho(M,g)$ of a compact Riemannian manifold $(M,g)$ equals
the minimal 
positive number $t_1$ with the above property.
%i.e. it is the minimal distance of a focal point $c(t_1)$ to  %the 
%point $p=c(0).$ 
The following statement is also called {\em second Rauch comparison
theorem (Rauch II)},
cf.~\cite[Thm.1.34]{Cheeger1975},
which is due to Berger. But 
note that we also include a rigidity 
statement, see for example~\cite[Thm.3.3]{Eschenburg1987}.
%%%%%%%%%%%%%%%%%%%%%%%%%%%%%%%%%%%%%%%%%%%%%%%%%%%%%%%%%%%%
\begin{proposition}
\label{pro:rauch}
Let $c:[0,a]\longrightarrow M$ be a 
geodesic parametrized by arc length on a Riemannian
manifold with positive sectional curvature $K \ge 1$
and let $c(t_1)$ be the first focal point.
Let $Y=Y(t)$ be a Jacobi field along $c=c(t)$
with covariant derivative along $c$:
$Y'(0)=\nabla Y(0)/dt=0.$ 
Then $\|Y(t)\|\le \|Y(0)\| \cos(2\pi t)$
for all $0 \le t \le t_1.$
If $\|Y(t_0)\|=\|Y(0)\|\cos(2\pi t_0)$ for some
$t_0 \in (0,t_1],$ then 
$\|Y(t)\|=\|Y(0)\|\cos(2\pi t)$ 
for all $t \in [0,t_0]$ and $Y/\|Y\|$ is parallel
along
$c\left|[0,t_0]\right..$
\end{proposition}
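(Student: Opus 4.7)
The statement is Rauch~II for curvature bounded below by $1$, together with a rigidity clause; the cleanest route is a matrix Riccati comparison. First I would reduce to normal Jacobi fields. Along a geodesic, $\langle Y,c'\rangle'' = \langle Y'',c'\rangle = -\langle R(Y,c')c',c'\rangle = 0$, so the assumption $Y'(0)=0$ forces $\langle Y,c'\rangle(t)$ to be the constant $\alpha:=\langle Y(0),c'(0)\rangle$; then $Y-\alpha c'$ is again a Jacobi field with vanishing covariant derivative at $0$ and lies in $c'(t)^\perp$. Since the focal radius is defined in terms of normal Jacobi fields, it suffices to treat $Y\perp c'$, and by rescaling $\|Y(0)\|=1$.

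Next I would pass to a matrix formulation. Choose a parallel orthonormal frame $e_2(t),\dots,e_n(t)$ of $c'(t)^\perp$, and let $\mathcal R(t)$ be the symmetric $(n-1)\times(n-1)$ curvature matrix $\mathcal R_{ij}(t)=\langle R(e_i,c')c',e_j\rangle$. The hypothesis $K\ge 1$ on $2$-planes containing $c'$ is equivalent to $\mathcal R(t)\ge I$ as symmetric operators. Let $A(t)$ be the fundamental solution of $A''+\mathcal R A=0$ with $A(0)=I$, $A'(0)=0$; it is invertible on $[0,t_1)$ by definition of the first focal point. On this interval set $U(t)=A'(t)A(t)^{-1}$; the standard computation that $(A^T A'-(A')^T A)' = 0$ shows $A^T A'$ is symmetric, hence $U$ is symmetric. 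It satisfies the matrix Riccati equation
\begin{equation*}
U'+U^{2}+\mathcal R=0,\qquad U(0)=0,
\end{equation*}
and in the model case $\mathcal R_0\equiv I$ its solution is $U_0(t)=-\tan(t)\,I$.

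The heart of the argument is the comparison $U(t)\le -\tan(t)\,I$. For each fixed unit vector $v$ let $\lambda(t)=\langle v,U(t)v\rangle$; then
\begin{equation*}
\lambda'(t)=-|U(t)v|^{2}-\langle v,\mathcal R(t)v\rangle \le -\lambda(t)^{2}-1,
\end{equation*}
using Cauchy--Schwarz ($|Uv|^{2}\ge\langle v,Uv\rangle^{2}$ for unit $v$) and $\mathcal R\ge I$. Since the scalar Riccati $\mu'=-\mu^{2}-1$ with $\mu(0)=0$ has the unique solution $\mu(t)=-\tan t$ on $[0,\pi/2)$, ODE comparison gives $\lambda(t)\le -\tan t$, hence $U(t)\le -\tan(t)\,I$. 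Writing $Y(t)=A(t)Y(0)$ in the parallel frame so that $Y'=UY$ on $[0,t_1)$, this yields
\begin{equation*}
\frac{d}{dt}\|Y(t)\|^{2}=2\langle Y,UY\rangle\le -2\tan(t)\,\|Y(t)\|^{2},
\end{equation*}
and integration from $0$ with $\|Y(0)\|=1$ produces $\|Y(t)\|\le\cos t$ on $[0,t_1)$; continuity extends the bound to $t_1$.

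For rigidity, equality $\|Y(t_0)\|=\cos t_0$ at some $t_0\in(0,t_1]$ forces both inequalities in the integral estimate to saturate on $[0,t_0]$: Cauchy--Schwarz equality forces $U(t)$ to have $Y(t)/\|Y(t)\|$ as an eigenvector with eigenvalue $-\tan t$, and saturation of $\mathcal R\ge I$ in the direction $Y$ forces the sectional curvature $K(Y,c')\equiv 1$ along $c|[0,t_0]$. The identity $Y'=-\tan(t)\,Y$ then decomposes, writing $Y=f\,e$ with $f=\|Y\|$ and $e=Y/\|Y\|$, into $f'=-\tan(t)\,f$ and $f\,e'=0$; the first gives $f(t)=\cos t$ on $[0,t_0]$, and the second gives $e'\equiv 0$, i.e.\ $Y/\|Y\|$ is parallel along $c|[0,t_0]$. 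The main technical obstacle is precisely the Riccati comparison: one must upgrade the pointwise curvature bound $\mathcal R\ge I$ to the symmetric operator inequality $U\le -\tan(t)I$ (not merely a trace or determinant bound), and the reduction to a scalar Riccati via fixed test vectors is what both makes the comparison clean and sets up the equality analysis at the end.
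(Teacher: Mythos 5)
The paper does not prove Proposition~\ref{pro:rauch} itself; it points to Cheeger--Ebin (Thm.~1.34) for the inequality, which is proved there by the index-form/Index Lemma comparison, and to Eschenburg (Thm.~3.3) for the rigidity clause, whose method is exactly the Riccati comparison you use. So your argument is correct and lines up with the second reference rather than the first. The Riccati route has the advantage you exploit: equality is traced through a single differential inequality, making the rigidity discussion essentially automatic, whereas the index-form proof requires a separate analysis of when the index comparison degenerates. Your reduction to a scalar Riccati equation by testing against a fixed parallel unit vector $v$ (using $|Uv|^2\ge\langle v,Uv\rangle^2$) is a clean way to obtain the operator inequality $U(t)\le -\tan(t)\,I$ without invoking a general matrix Riccati comparison theorem, and the reduction of the hypothesis to normal Jacobi fields (which is in fact necessary for the stated inequality to hold) is correctly handled.

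Two small points. First, the bound $\cos(2\pi t)$ in the proposition (and the matching expression $\cos(2\pi f(t))$ in the proof of Corollary~\ref{cor:rauch}) must be a misprint for $\cos t$: with $c$ parametrized by arc length and $K\ge 1$, the model Jacobi field is $\cos(t)\,e(t)$, the focal radius is at most $\pi/2$, and $\cos(2\pi t)$ would be negative on part of $[0,t_1]$. Your $\cos t$ is the intended statement. Second, in the rigidity paragraph you say ``Cauchy--Schwarz equality forces $U(t)$ to have $Y/\|Y\|$ as an eigenvector,'' but the Cauchy--Schwarz step in your Riccati estimate was applied to a \emph{fixed} test vector $v$, while the direction $e(t)=Y(t)/\|Y(t)\|$ moves. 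The conclusion is still correct, but the clean way to state it is: equality at $t_0$ forces $\langle e(t),U(t)e(t)\rangle=-\tan t$ for all $t\le t_0$, and since $-\tan(t)I-U(t)\ge 0$ this annihilates $e(t)$, giving $U(t)e(t)=-\tan(t)e(t)$; then $Y''=-Y$ yields $\mathcal R(t)e(t)=e(t)$, i.e.\ $K(Y,c')\equiv 1$, and the decomposition $Y=fe$ gives $e'\equiv 0$ as you write.
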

%%%%%%%%%%%%%%%%%%%%%%%%%%%%%%%%%%%%%%%%%%%%%%%%%%%%%%%%%%%%%
Using this Rauch comparison result one obtains
the following consequence. This result can be
found for example 
in~\cite[Cor.1.36]{Cheeger1975}
or in~\cite[Cor. 2.7.10]{Klingenberg1995}.
But note that we also include a rigidity statement.
% % % % % % % % % % % % % % % % % % % %
% Definition of exp added, 08-04
% % % % % % % % % % % % % % % % % % % % % % %
We denote by $\exp: TM \longrightarrow M$
the {\em exponential map,}
i.e. 
for $v \in T_pM$ the curve
$t\in \R \longmapsto \exp(t v)$
equals the geodesic $\gamma_v=\gamma_v(t)$
with initial point $p=\gamma_v(0)$ and
direction $v=\gamma'_v(0).$
And $\exp_p:T_pM \longrightarrow M$ denotes the
restriction to the tangent space $T_pM$ at $p.$
%%%%%%%%%%%%%%%%%%%%%%%%%%%%%%%%%%%%%%%%%%%%%%%%%%%%%%%%%%%%
%%%%%%%%%%%%%%%%%%%%%%%%%%%%%%%%%%%%%%%%%%%%%%%%%%%%%%%%%%%%%
%%%%%%%%%%%%%%%%%%%%%%%%%%%%%%%%%%%%%%%%%%%%%%%%%%%%%%%%%%%
\begin{corollary}
\label{cor:rauch}
%{\rm (\cite[1.31, page 31]{Cheeger1975})}
Let $c: [0,a]\longrightarrow M$ be 
a geodesic on a compact 
Riemannian manifold of positive sectional
curvature $K \ge 1$ 
and focal radius $\rho>0.$
Let $c_1: [0,a]\longrightarrow S^n_1$ be a great
circle of length $L(c)$ on the standard
sphere $S^n_1$ of constant sectional curvature $1.$
Choose 
parallel unit vector fields $E$ along $c$ resp.
$E_1$ along $c_1$ which are orthogonal to the geodesic
$c$ resp. $c_1.$
%i.e. $\langle \gamma_i'(t), E_i(t)\rangle=0.$
For a smooth 
non-negative function $f:[0,a]\longrightarrow \mathbb{R}$ 
with $0 < f(t) \le \rho$ for all $t \in (0,a)$
we define
the smooth curve
$b(t)=\exp_{c(t)}\left(f(t) E(t)\right) \in M, 
t\in [0,a]$
on $M$ resp. the curve 
$b_1(t)=\exp_{c_1(t)}
\left(f(t) E_1(t)\right)\in S_1^n, t\in [0,a]$
on the round sphere $S^n_1$ of constant sectional
curvature $1.$

Then the following statements hold:

\smallskip

(a) $L(b)\le L(b_1).$ 

\smallskip

(b) 
Assume $L(b)=L(b_1).$
Let $f^*:[0,a]\longrightarrow \R$ be a non-negative
function with $0<f^*(t)\le f(t)$ for all $t \in (0,a)$
and define the curve
$b^*(t)=\exp_{c(t)}
\left(f^*(t) E(t)\right) \in M, t\in [0,a]$
on $M$ resp.
the following curve 
$b^*_1(t)=$ $\exp_{c_1(t)}
\left(f^*(t) E_1(t)\right)\in S^n_1, t\in [0,a]$
on the round sphere $S^n_1.$

Then $L(b^*)=L(b^*_1).$
\end{corollary}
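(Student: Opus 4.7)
The plan is to express the lengths of $b$ and $b_1$ via the variations $\alpha(s,t)=\exp_{c(t)}(sE(t))$ and $\alpha_1(s,t)=\exp_{c_1(t)}(sE_1(t))$, and then apply Proposition~\ref{pro:rauch} to the transverse Jacobi fields that arise. For each fixed $t$ the curve $s\mapsto\alpha(s,t)$ is the unit-speed geodesic with initial velocity $E(t)$, so $\partial_s\alpha(s,t)$ has unit length, and the variation field $J_t(s):=\partial_t\alpha(s,t)$ is a Jacobi field along this radial geodesic with initial conditions $J_t(0)=c'(t)$ and $\nabla_s J_t(0)=\nabla_t E(t)=0$ (the latter because $E$ is parallel along $c$). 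Because $c'(t)$ is orthogonal to $E(t)=\partial_s\alpha(0,t)$, both initial-data vectors are orthogonal to the radial direction, so $J_t$ is a normal Jacobi field and $J_t(s)\perp \partial_s\alpha(s,t)$ for every $s$. Writing $b(t)=\alpha(f(t),t)$ and differentiating,
\begin{equation*}
b'(t)=f'(t)\,\partial_s\alpha(f(t),t)+J_t(f(t)),\qquad \|b'(t)\|^2=f'(t)^2+\|J_t(f(t))\|^2.
\end{equation*}
The identical construction on $S^n_1$ produces a normal Jacobi field $J^1_t$ along the radial geodesic from $c_1(t)$ in direction $E_1(t)$, and $\|b_1'(t)\|^2=f'(t)^2+\|J^1_t(f(t))\|^2$.

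For part (a), the initial data of $J_t$ and $J^1_t$ match in norm: $\|J_t(0)\|=1=\|J^1_t(0)\|$ since $c$ and $c_1$ are both unit-speed, and both covariant derivatives vanish at $s=0$. Since $f(t)\le\rho$ does not exceed the first focal distance either on $M$ (by definition of $\rho$) or on $S^n_1$ (where the focal radius equals $\pi/2\ge\rho$), Proposition~\ref{pro:rauch} applied to each pair of Jacobi fields gives $\|J_t(s)\|\le\|J^1_t(s)\|$ on $[0,f(t)]$. In particular $\|b'(t)\|\le\|b_1'(t)\|$, and integrating over $[0,a]$ yields $L(b)\le L(b_1)$.

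For part (b), assume $L(b)=L(b_1)$. Together with the pointwise inequality $\|b'(t)\|\le\|b_1'(t)\|$ and continuity, this forces $\|J_t(f(t))\|=\|J^1_t(f(t))\|$ for every $t\in[0,a]$. For each such $t$, the rigidity clause of Proposition~\ref{pro:rauch} applied with $t_0=f(t)$ upgrades this single equality to $\|J_t(s)\|=\|J^1_t(s)\|$ for every $s\in[0,f(t)]$. Since $0<f^*(t)\le f(t)$, we may evaluate at $s=f^*(t)$ and substitute into the same decomposition formula:
\begin{equation*}
\|(b^*)'(t)\|^2=(f^*)'(t)^2+\|J_t(f^*(t))\|^2=(f^*)'(t)^2+\|J^1_t(f^*(t))\|^2=\|(b^*_1)'(t)\|^2.
\end{equation*}
Integrating gives $L(b^*)=L(b^*_1)$. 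The subtle point is the bookkeeping in (b): one has a family of radial geodesics indexed by $t$, and the rigidity assertion must be invoked along each one separately, with endpoint $f(t)$ depending on $t$; it is essential that Proposition~\ref{pro:rauch} delivers equality on the whole interval $[0,f(t)]$ rather than only at $s=f(t)$, since the shorter curve $b^*$ visits every level $f^*(t)\le f(t)$.
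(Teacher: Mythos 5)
Your argument is correct, and in part (a) it actually supplies the comparison proof that the paper only cites (from Cheeger--Ebin Cor.~1.36 / Klingenberg Cor.~2.7.10): the orthogonal decomposition $\|b'(t)\|^2=f'(t)^2+\|J_t(f(t))\|^2$ together with $\|J_t(s)\|\le\cos(s)=\|J^1_t(s)\|$ from Proposition~\ref{pro:rauch} is exactly the standard route (the factor $2\pi$ in the paper's statement of the proposition is evidently a typo for $\cos(t)$ with arc-length parametrization, and your remark that $\rho\le\pi/2$ justifies staying inside the first focal distance on both spaces). In part (b) you and the paper both start from the same key step --- the integral equality plus the pointwise inequality force $\|J_t(f(t))\|=\cos(f(t))$ for every $t$, and the rigidity clause of Proposition~\ref{pro:rauch} upgrades this to $\|J_t(s)\|=\cos(s)$ on all of $[0,f(t)]$ --- but you then finish by evaluating the same length decomposition at $s=f^*(t)$ and integrating, whereas the paper pushes the rigidity further to show that $Y_t/\|Y_t\|$ is parallel and that the variation defines a totally geodesic isometric immersion of the corresponding surface in $S^n_1$ into $M$, from which $L(b^*)=L(b^*_1)$ follows. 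Your endgame is more economical for the stated conclusion; the paper's yields a stronger structural fact (the isometrically immersed totally geodesic strip) that it exploits implicitly in the later rigidity arguments, but for Corollary~\ref{cor:rauch} itself your direct computation suffices and is complete.
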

%%%%%%%%%%%%%%%%%%%%%%%%%%%%%%%%%%%%%%%%%%%%%%%%%%%%%%%%%%%%
%%%%%%%%%%%%%%%%%%%%%%%%%%%%%%%%%%%%%%%%%%%%%%%%%%%%%%%%%%%%%
%%%%%%%%%%%%%%%%%%%%%%%%%%%%%%%%%%%%%%%%%%%%%%%%%%%%%%%%%%%
\begin{proof}
The proof of (a) is given in~\cite[Cor. 1.36]{Cheeger1975}
or \cite[Cor. 2.7.10]{Klingenberg1995}.

Assume $L(b)=L(b_1).$ Let for $t \in [0,a]:$
$s\in[0,f(t)]\longmapsto \gamma_t(s)=\exp_{c(t)}(s E(t))$
denote the geodesic with $\gamma_t(0)=c(t)$ and
$\gamma_t'(0)=E(t)$ and let $s \in [0,f(t)]
\longmapsto W_t(s)$ be the parallel
unit vector field along $\gamma_t$ with $W_t(0)=c'(t)$
for all $t \in [0,a].$
%Then $Y_t(s)=\cos (2\pi s) W_t(s), s \in [0,f(t)]$
%is a Jacobi field along $c_t\left| [0,f(t)]\right..$
%Then the map 
%$$
%(s,t) \in [0,f(t)]\times 
%[0,a]\longmapsto \exp_{c(t)}\left(s E(t)\right)\in M$$
%parametrizes a totally geodesic submanifold of constant %curvature $1.$
%%%%%%%%%%%%%%%%%%%%%%%%%%%%%%%%%%%%%%%%%%%%%%%%%%%%%%%%%%%%%
%%%%%%%%%%%%%%%%%%%%%%%%%%%%%%%%%%%%%%%%%%%%%%%%%%%%%%%%%%%%%
We denote by $Y_t(s)=\partial \gamma_t(s)/\partial s$
the Jacobi field  
defined by the geodesic
variation $t \longmapsto \gamma_t,$ 
which
satisfies $\|Y_t(f(t))\|=
\cos(2\pi f(t)).$ It follows from 
the Proof of part (a) that
$\|Y_t(s)\|=\cos (2\pi s)$ for all $s \in [0,f(t)].$
Then we conclude from
Proposition~\ref{pro:rauch}
that 
$W_t(s)=Y_t(s)/\cos(2\pi s)$ is the parallel unit vector field
along $c_t:[0,f(t)]\longrightarrow M$ with
$W_t(0)=c'(t).$
%%%%%%%%%%%%%%%%%%%%%%%%%%%%%%%%%%%%%%%%%%%%%%%%%%%%%%%%%%%%%
Then $U:=
\left\{\exp_{c_1(t)}\left(s E_1(t)\right)\in S^n_1\,;\,
0 \le s \le f(t)\right\}$ is a 
totally geodesic surface in the round sphere.
Hence the map 
$$
\exp_{c_1(t)}\left(s E_1(t)\right) \in U \subset
S_1^n
\longmapsto \exp_{c(t)}\left(s E(t)\right)\in M$$
defines a
totally geodesic isometric immersion of 
the totally geodesic surface $(U,g_1)$ 
in $S^n_1$ into the Riemannian manifold
$(M,g).$
Therefore the claim follows.
\end{proof}
%%%%%%%%%%%%%%%%%%%%%%%%%%%%%%%%%%%%%%%%%%%%%%%%%%%%%%%%%%%%%
%%%%%%%%%%%%%%%%%%%%%%%%%%%%%%%%%%%%%%%%%%%%%%%%%%%%%%%%%%%%%
\section{Morse Theory on path spaces}
\label{sec:morse}
%%%%%%%%%%%%%%%%%%%%%%%%%%%%%%%%%%%%%%%%%%%%%%%%%%%%%%%%%%%%%
%%%%%%%%%%%%%%%%%%%%%%%%%%%%%%%%%%%%%%%%%%%%%%%%%%%%%%%%%%%%%
The energy functional $E: \Omega_{pq}M \longrightarrow \R$
is differentiable and its critical points 
are geodesics $c \in \Omega_{pq}M.$
The maximal dimension of a subspace of the 
tangent space $T_c\Omega_{pq}M$ on which the 
hessian $d^2 E(c)$ is negative definite, is called 
{\em index} $\indo (c).$
If $p=q$ and if $c$ is a closed geodesic, then its
index $\ind (c)$ as critical point of $E: \Lambda M
\longrightarrow \R$ is analogously defined.
In general the invariants $\indo (c)$ and $\ind (c)$
differ, the difference
$0 \le {\rm conc} (c):=\ind(c)-\indo (c)\le n-1$ is called 
{\em concavity,}
cf.~\cite[Thm.2.5.14]{Klingenberg1995}.
The nullity 
$\egono (c)$ of a geodesic $c \in \Omega_{pq}M$
is defined as the dimension of the kernel of
the hessian. It coincides with the dimension of Jacobi 
fields $Y=Y(t)$ along $c=c(t),t\in [0,1]$ which
vanish at the end points, i.e. $Y(0)=Y(1)=0.$
It follows that $0 \le \egono(c) \le n-1.$
If $c$ is a closed geodesic $c \in \Omega_p M \subset
\Lambda M$ then the nullity 
$\egon(c)$ as critical point of
the $S^1$-invariant energy functional 
$E: \Lambda M \longrightarrow \R$ equals the dimension
of periodic and orthogonal Jacobi fields.
Hence $0 \le \egon(c) \le 2n-2.$

The energy functional $E:\Omega_{pq}M\longrightarrow
\R$ is a {\em Morse function}, if all geodesics
$c \in \Omega_{pq}M$ are {\em non-degenerate,}
i.e. $\egono (c)=0.$ This is the case if and only if
$q \in M$ is a regular point of the exponential
mapping $\exp_p: T_pM \longrightarrow M,$ 
cf.~\cite[\S 18]{Milnor1963}.
The Morse lemma implies that around any geodesic $c$ there are coordinates
$(x,y) \in V_-\oplus V_+$ with $E(x,y)=-\|x\|^2+\|y\|^2.$
Here $V_-$ is a finite-dimensional subspace 
of dimension $\indo (c)$
on which
the Hessian $d^2E(c)$ is negative definite and
$V_+$ is a closed complement in $T_c\Omega_{pq}M.$
Let the energy functional 
$E:\Omega_{pq}M\longrightarrow \R$
be a Morse function,
$ l \in \n$   and let $a$ be a critical value  
such that for some $\epsilon>0$ the value $a$ is
the only critical value in
the interval $(a-2\epsilon,a+2\epsilon).$
%contains only regular values then 
Then the following holds:
There are 
geodesics $c_1,\ldots,c_r \in \Omega_{pq}M$
with $E(c_1)=\ldots=E(c_r)=a$ and
$\indo(c_1)=\ldots=\indo(c_r)=l$
that for all $\delta \in (0,\epsilon]:$
\begin{equation}
\label{eq:h-morse}
H_{l}\left(\Omega_{pq}^{\le a+\delta}M,
\Omega_{pq}^{\le a-\delta}M;\z_2\right)
\cong \bigoplus_{k=1}^r \z_2\left[F_{c_k}\right]\,.
\end{equation}
The relative homology class $[F_{c_k}]\in
H_{l}\left(\Omega^{\le a+\delta}_{pq}M,
\Omega^{\le a-\delta}_{pq}M;\z_2\right)$
can be represented 
by the set
$\{(x,0); x \in V_-\}$ in a neigborhood
of $c_k$ in which Morse coordinates 
$(x,y)\in V_-\oplus V_+$ exist.
The Morse Lemma implies that 
there is a sufficiently small
neighborhood $U(c_k)\subset\Omega_{pq}M$ 
of $c_k$ such that 
the homology class $[F_{c_k}]$ is the unique
generator of the
critical group
$H_{l}\left(\Omega_{pq}^{\le a-\delta}M
\cup U(c_k)\right.$, $\left.
\Omega_{pq}^{\le a-\delta}M;\z_2
\right)\cong \z_2.$

In the sequel we use an approximation argument.
If $E: \Omega_{p}M \longrightarrow \R$ is not
a Morse function, we choose a sequence $p_j$
of regular points for the exponential map
$\exp_p$ converging to $p.$ This is possible since
the regular points of the exponential map are dense,
cf.~\cite[Cor. 18.2]{Milnor1963}.
%%%%%%%%%%%%%%%%%%%%%%%%%%%%%%%%%%%%%%%%%%%%%%%%%%%%%%%%%%%%%
%%%%%%%%%%%%%%%%%%%%%%%%%%%%%%%%%%%%%%%%%%%%%%%%%%%%%%%%%%%%%
%%%%%%%%%%%%%%%%%%%%%%%%%%%%%%%%%%%%%%%%%%%%%%%%%%%%%%%%%%%%%
\section{Critical values of homology classes of loops}
\label{sec:critical-values}
%%%%%%%%%%%%%%%%%%%%%%%%%%%%%%%%%%%%%%%%%%%%%%%%%%%%%%%%%%%%%%
%%%%%%%%%%%%%%%%%%%%%%%%%%%%%%%%%%%%%%%%%%%%%%%%%%%%%%%%%%%%%%
Let $\inj= \inj(M,g)>0$ be the {\em injectivity radius} of the Riemannian
manifold $(M,g).$ Then between two points $p,q$ of distance 
$d(p,q)< \inj$ there is a unique {\em minimal} geodesic $c:[0,1]\longrightarrow M$
with $p=c(0),q=c(1)$ and $L(c)=d(p,q).$
Here $d(p,q)$ is the {\em distance} of the
points $p$ and $q.$
%%%%%%%%%%%%%%%%%%%%%%%%%%%%%%%%%%%%%%%%%%%%%%%%%%%%%%%%%%%%
\begin{lemma}
\label{lem:homo}
Let $p,q,r \in M$ with $d(q,r) \le \min \left\{\inj/2,1/2\right\} $ and let
$c_{rq}:[0,d]\longrightarrow M$ be 
the unique minimal geodesic joining $r=c_{rq}(0)=r,
q=c_{rq}(1)$ with $L(c_{rq})=d=d(q,r).$
 Let $\homo_{rq}: \Omega_{pq}M \longrightarrow \Omega_{pr} M$
 be defined by
 \begin{equation*}
 \homo_{rq}(\sigma)(t):=
 \left\{ 
 \begin{array}{ccc}
  \sigma\left(t/(1-d(q,r)\right)&;& t\in [0,1-d(q,r)]\,;\\
  &&\\
  c_{rq}\left(t-\left(1-d(q,r)\right)\right)&;& t\in [1-d(q,r),1]
   \end{array}
\right.\,.
\end{equation*}
Let $h \in H_j\left(\Omega_{pq}M;\z_s\right)$
and let
$\left(\homo_{rq}\right)_*:$
$H_j\left(\Omega_{pq}M;\field\right)
\longrightarrow 
H_j\left(\Omega_{pr}M;\field\right)$
be the induced homomorphism. Then
we obtain:
\begin{equation}
\label{eq:crisigma}
(1-d(q,r))\cri(h)-\frac{1}{2} d(q,r)
 \le \cri\left(\left(\homo_{rq}\right)_{*}(h)\right)
 \le \frac{\cri(h)}{1-d(q,r)}+\frac{1}{2}d(q,r).
\end{equation}
In particular the function
$r \in M \longmapsto 
\cri\left(\left(\homo_{rq}\right)_{*}(h)\right)\in \R^+$
is continuous at $q.$
\end{lemma}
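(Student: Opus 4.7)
The plan is to derive both inequalities from an explicit energy identity for the concatenation map $\homo_{rq}$, combined with a homotopy argument to reverse one of the bounds.

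First I would establish the identity $E(\homo_{rq}(\sigma)) = E(\sigma)/(1-d)+d/2$ with $d:=d(q,r)$: split the $t$-integral at $1-d$, substitute $u=t/(1-d)$ on the first piece to pick up the factor $1/(1-d)$, and use that the second piece is a unit-speed geodesic arc of length $d$, contributing $d/2$. In particular $\homo_{rq}$ is $H^1$-continuous and carries $\Omega_{pq}^{\le a}M$ into $\Omega_{pr}^{\le a/(1-d)+d/2}M$. Pushing a singular $\field$-chain in $\Omega_{pq}^{\le a}M$ representing $h$ forward under $\homo_{rq}$ therefore produces a chain in $\Omega_{pr}^{\le a/(1-d)+d/2}M$ representing $(\homo_{rq})_*(h)$, and taking the infimum over $a>\cri(h)$ yields the upper bound $\cri((\homo_{rq})_*(h)) \le \cri(h)/(1-d)+d/2$.

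For the lower bound I introduce the analogous map $\homo_{qr}\colon \Omega_{pr}M \to \Omega_{pq}M$ built from the reverse minimal geodesic and argue that $\homo_{qr}\circ\homo_{rq}$ is homotopic, in the space of self-maps of $\Omega_{pq}M$, to the identity. The homotopy continuously retracts the tacked-on out-and-back geodesic $q\to r\to q$ to the constant point $q$ while simultaneously re-expanding the $\sigma$-portion to occupy the full interval $[0,1]$; because endpoints stay at $p$ and $q$ throughout, this is a valid homotopy in $\Omega_{pq}M$. Hence $(\homo_{qr})_*\circ(\homo_{rq})_* = \mathrm{id}$ on $H_j(\Omega_{pq}M;\field)$. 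Applying the upper bound already proved, now to $\homo_{qr}$ at the class $h':= (\homo_{rq})_*(h)$, gives $\cri(h) \le \cri(h')/(1-d)+d/2$, which rearranges to $\cri(h') \ge (1-d)\cri(h)-(1-d)d/2 \ge (1-d)\cri(h)-d/2$, the required lower bound.

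Continuity at $q$ is then automatic: as $r\to q$ one has $d\to 0$, so both bounds converge to $\cri(h)$, and at $r=q$ the map $\homo_{qq}$ is literally the identity, matching the limiting value. The main point of friction will be writing the retraction homotopy $\homo_{qr}\circ\homo_{rq}\simeq\mathrm{id}$ as a formula that is jointly $H^1$-continuous in $(\sigma,u)$ and keeps both endpoints fixed throughout; this is standard but fiddly. One parametrization is to let $u\in[0,1]$ govern the width of the initial $\sigma$-segment, placing it on $[0,\,1-(1-u)(1-(1-d)^2)]$ and letting the out-and-back arc live on the shrinking complementary interval, traversed at appropriately rescaled speed so that at $u=0$ one recovers $\homo_{qr}\circ\homo_{rq}$ and at $u=1$ the identity.
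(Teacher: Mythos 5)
Your proposal is correct and follows essentially the same route as the paper: establish the energy identity $E(\homo_{rq}(\sigma)) = E(\sigma)/(1-d)+d/2$, push a representing cycle through sublevel sets to get the upper bound, then feed the already-proved upper bound into $\homo_{qr}$ applied to $(\homo_{rq})_*(h)$ and use that $\homo_{qr}\circ\homo_{rq}\simeq\mathrm{id}$ on $\Omega_{pq}M$ (hence $(\homo_{qr})_*\circ(\homo_{rq})_*=\mathrm{id}$) to rearrange into the lower bound. The paper simply asserts the homotopy to the identity while you sketch an explicit parametrization, which is a minor added detail but not a different method.
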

%%%%%%%%%%%%%%%%%%%%%%%%%%%%%%%%%%%%%%%%%%%%%%%%%%%%%%%%%%%%%%%%%%%%%%%%%%%%%%%%%%%%%%%%%%%%%%%%
\begin{proof}
For $\sigma:[0,1]\longrightarrow M,$ with $p=\sigma(0), q=\sigma(1)$
 and $r \in M$ with $d(q,r)\le \inj$ 
 we obtain 
 \begin{equation}
 \label{eq:Esigma}
  E\left(\homo_{rq}\left(\sigma\right)\right)
 = \frac{E\left(\sigma\right)}{1-d(q,r)}+
 \frac{1}{2}d(q,r)\,.
 \end{equation}
 We conclude:
 \begin{equation*}
  \cri\left(\left(\homo_{qr}\right)_{*}(h)\right)
  \le \frac{\cri(h)}{1-d(q,r)}+ \frac{d(q,r)}{2}\,.
 \end{equation*}
% This proves the right inequality of
%Equation~\eqref{eq:crisigma}.
 Applying this inequality to 
 $\left(\homo_{qr}\right)_*\left(\left(\homo_{rq}\right)_{*}(h)\right)$
 and using the fact that
 \begin{equation*}
 \cri(h)=\cri\left(\left(\homo_{qr}\right)_*\circ 
 \left(\homo_{rq}\right)_{*}(h)\right)
\end{equation*}
yields also the left inequality.
Here we use that 
$\homo_{qr}\circ \homo_{rq}: \Omega_pM
\longrightarrow \Omega_pM$ is 
homotopic to the identity map.
\end{proof}
%%%%%%%%%%%%%%%%%%%%%%%%%%%%%%%%%%%%%%%%%%%%%%%%%%%%%%%%%%%%%%%%%%%%%%%%%%%%%%%%%%
\begin{lemma}
\label{lem:cri-two-pi}
Let $(M,g)$ be a compact Riemannian manifold of positive sectional
curvature $K\ge 1,$ 
and $p \in M:$
Let $h_p \in H_k\left(\Omega_p M;\field\right)$ 
be a non-zero homology class.

\smallskip

(a) If $k=n-1$ then $\sqrt{2\cri (h_p)} \le 2\pi.$

\smallskip

(b) If $k \in \{1,2,\ldots, n-2\}$ then $\sqrt{2\cri (h_p)} \le \pi.$

%with $1 \le k \le n-1.$ 
%Then its critical value 
%$\crimp$ satisfies for any $p \in M:$
%$\crimp\le 2\pi.$
\end{lemma}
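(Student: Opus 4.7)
The plan is to combine Morse theory on the based path space with a Rauch-type lower bound on the Morse index of geodesics on $(M,g)$, the latter playing the role of the main geometric input.

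Because $p$ need not be a regular value of $\exp_p$, the energy $E\colon\Omega_pM\to\R$ is typically not a Morse function. Following the approximation remark at the end of Section~\ref{sec:morse}, I would choose a sequence $q_j\to p$ of regular values of $\exp_p$ (possible by density, \cite[Cor.~18.2]{Milnor1963}) and use Lemma~\ref{lem:homo} to transfer $h_p$ to $h_j:=(\homo_{q_jp})_\ast(h_p)\in H_k(\Omega_{pq_j}M;\field)$, noting from the estimate~\eqref{eq:crisigma} that $\cri(h_j)\to\cri(h_p)$ as $j\to\infty$. It then suffices to bound each $\cri(h_j)$ in the Morse situation on $\Omega_{pq_j}M$ and pass to the limit.

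In this Morse setting, with $a_j:=\cri(h_j)$, the class $h_j$ lies in the image from $H_k(\Omega_{pq_j}^{\le a_j+\epsilon}M;\field)$ but not from $H_k(\Omega_{pq_j}^{\le a_j-\epsilon}M;\field)$. The long exact sequence of the pair then forces a non-zero image of $h_j$ in the relative group $H_k(\Omega_{pq_j}^{\le a_j+\epsilon}M,\Omega_{pq_j}^{\le a_j-\epsilon}M;\field)$, and by~\eqref{eq:h-morse} this group is generated by classes $[F_{c_i}]$ attached to critical points $c_i$ at level $a_j$ with $\indo(c_i)=k$. Hence there exists a geodesic $c_j\in\Omega_{pq_j}M$ with $E(c_j)=a_j$ and $\indo(c_j)=k$; in particular $L(c_j)=\sqrt{2 a_j}$.

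The decisive geometric input is the lower bound
\[
\indo(c)\ge m(n-1)\quad\text{for every geodesic $c$ of length $L(c)>m\pi$ on $(M,g)$}, \qquad m\in\{0,1,2,\ldots\}.
\]
I would establish it by fixing an orthonormal basis $Y_1,\ldots,Y_{n-1}$ of normal Jacobi fields along $c$ with $Y_i(0)=0$ and applying Proposition~\ref{pro:rauch} iteratively on each subinterval between consecutive zeros: Rauch~II forces the $\ell$-th positive zero of each $Y_i$ to occur at a time $\le \ell\pi$, so each $Y_i$ has at least $m$ zeros in $(0,L(c))$. Letting $V_t\subset\mathrm{span}(Y_1,\ldots,Y_{n-1})$ denote the subspace of fields vanishing at $t$, and $N_i$ the number of zeros of $Y_i$ in $(0,L(c))$, the Morse index theorem gives $\indo(c)=\sum_{t\in(0,L(c))}\dim V_t \ge \sum_{i=1}^{n-1} N_i \ge m(n-1)$, the middle inequality following from the linear independence of the $Y_i$ (which implies $\dim V_t\ge\#\{i:Y_i(t)=0\}$).

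Combining: for part~(a), if $k=n-1$ and $L(c_j)>2\pi$ then $\indo(c_j)\ge 2(n-1)>n-1$, contradicting $\indo(c_j)=k$; hence $L(c_j)\le 2\pi$ and $\sqrt{2\cri(h_j)}\le 2\pi$, giving~(a) in the limit $j\to\infty$. For part~(b), if $1\le k\le n-2$ and $L(c_j)>\pi$ then $\indo(c_j)\ge n-1>k$, again a contradiction, so $L(c_j)\le\pi$ and the limit yields~(b). The main obstacle is the index estimate: extracting the full factor $n-1$ (rather than merely $1$) requires handling all $n-1$ normal Jacobi fields simultaneously and being careful about possibly coincident conjugate times; the inequality $\sum_t\dim V_t\ge\sum_i N_i$ bypasses a direct computation of the multiplicities by counting each Jacobi field once per zero.
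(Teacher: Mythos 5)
Your proposal follows essentially the same route as the paper: approximate by a Morse situation on $\Omega_{pq_j}M$ with $q_j\to p$ regular, use Lemma~\ref{lem:homo} to control the critical values, extract from Morse theory a geodesic of index $k$ at the critical level, and bound its length by the index estimate $\indo(c)\ge m(n-1)$ for $L(c)>m\pi$ when $K\ge 1$ (which the paper simply cites from Klingenberg, ch.~2.6, and you work out via the Morse index theorem). The only quibble is that the zero-counting step needs the \emph{first} Rauch comparison (Jacobi fields with $Y(0)=0$, conjugate points) rather than Proposition~\ref{pro:rauch}, which is the focal-point version with $\nabla Y(0)=0$; this does not affect the correctness of the argument.
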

%%%%%%%%%%%%%%%%%%%%%%%%%%%%%%%%%%%%%%%%%%%%%%%%%%%%%%%%%%%%
%%%%%%%%%%%%%%%%%%%%%%%%%%%%%%%%%%
\begin{proof}
 Choose a sequence $p_j$ of regular points of $\exp_p$ with
 $\lim_{j\to \infty}d(p,p_j)=0,$ 
 cf. for example \cite[Cor. 18.2]{Milnor1963}.
Then $E: \Omega_{pp_j}M \longrightarrow \R$
is a Morse function for any $j \ge 1.$
% The fundamental theorem of Morse theory
% \cite[Thm. 17.3]{Milnor1963}
Morse theory implies that there are 
 finitely many geodesics $c_{j,1},\ldots,c_{j,r(j)}\in \Omega_{pp_j}(M)$
 such that there exists one 
geodesic $c_{j,1}$ with \\
$\sqrt{2 \cri\left(\left(\homo_{p_jp}\right)_*(h_p)\right)}=
L\left(c_{j,1}\right)=
 \max\left\{L\left(c_{j,1}\right), \ldots,L\left(c_{j,r(j)}
\right)\right\}$
 and
 $\indo(c_j)=k\,,$ 
 cf. Section~\ref{sec:morse}.
 
 (a) Since $K \ge 1$ and  $k=n-1$ we conclude from
 Rauch comparison: $L(c_{j,1})\le 2\pi,$
 cf. \cite[ch. 2.6]{Klingenberg1995}.
 Hence $\cri(h)=\lim_{j\to \infty}\cri\left(\left(f_{p_jp}\right)_*(h_p)\right)
  \le 2\pi^2.$
  
  (b) Since $K \ge 1$ and $1 \le k \le n-2$ we conclude from
 Rauch comparison: $L(c_{j,1})\le \pi,$
 cf. \cite[ch. 2.6]{Klingenberg1995}.
 Hence $\cri(h)=\lim_{j\to \infty}\cri\left(\left(f_{p_jp}\right)_*(h_p)\right)
  \le \pi^2/2.$
 \end{proof}
%%%%%%%%%%%%%%%%%%%%%%%%%%%
%%%%%%%%%%%%%%%%%%%%%%%%%%%%
%%%%%%%%%%%%%%%%%%%%%%%%%%%%%%%%%%%%%%%%%%%%%%%%%%%%%%%%%%%%
%%%%%%%%%%%%%%%%%%%%%%%%%%%%%%%%%%%%%%%%%%%%%%%%%%%%%
%%%%%%%%%%%%%%%%%%%%%%%%%%%%%%%%%%%%%%%%%%%%%%%%%%%%%
%%%%%%%%%%%%%%%%%%%%%%%%%%%%%%%%%%%%%%%%%%%%%%%%%%%%%
For a geodesic $c\in \Omega_{pq}M$ with $L(c)>\pi$ on a
Riemannian manifold with $K \ge 1$
we construct a mapping 
$f_c: D^{n-1}\longrightarrow  \Omega_{pq}^{\le a}M$
which defines a relative homology class
$[f_c]\in H_{n-1}\left(\Omega^{< E(c)}M\cup{c}, 
\Omega^{<E(c)}M;\z_2\right).$
If $E: \Omega_{pq}M\longrightarrow \R$
is a Morse function, this relative homology
class equals the homology class
$[F_c]$ defined using the Morse Lemma,
cf. Section~\ref{sec:morse} and
Lemma~\ref{lem:fcv-properties}.

This construction is motivated by a construction
for symmetric spaces called 
{\em cutting accross the corners}
for example in~\cite[Sec.10]{Bott-Samelson1958}.
In case of the sphere $S^n$ and an
arc $c:[0,1]\longrightarrow S^n$ of a great circle
of length $\pi\le L(c)\le 2\pi$ one
starts with a cycle represented by
the curves of length $L(c)$ formed by
half-great circles through
$p=c(0)$ and $-p=c(\pi/L(c))$ 
and the arc $c(t),t\in[\pi/L(c),1].$
Hence all curves except $c$ are non-smooth
geodesic polygons, which can be shortened
by cutting accross the corners
or using
the negative gradient flow of $E.$
%%%%%%%%%%%%%%%%%%%%%%%%%%%%%%%%%%%%%%%%%%%%%%%%%%%%%%%%%%%%%%%%%%%%%%%
\begin{definition}
 \label{def:negative-cycle}
 Let $(M,g)$ be a compact and simply-connected Riemannian manifold
 of positive sectional curvature $K \ge 1$
 and {\em focal radius} $\rho.$
 Let $c:[0,1]\longrightarrow M$ be a geodesic joining
 $p=c(0)$ and $q=c(1)$ with length $L(c)\ge \pi.$ 
 We define a mapping
 \begin{equation}
  \label{eq:fcvdn}
  f_c: D^{n-1}(\rho) \longrightarrow \Omega_{pq}M
 \end{equation}
as follows:
We identify the $(n-1)$-dimensional
disc $D^{n-1}(\rho)=\{x\in \R^{n-1};$
$ \|x\|^2\le \rho\}$ with the set 
$D^{n-1}(\rho)=\left\{w \in T_pM\,;\, 
\langle w,c'(0)\rangle=0,\|w\|\le \rho \right\}$
of tangent vectors at $p$ orthogonal to $c'(0)$
with norm $\le \rho.$
Let
$\widetilde{f}_c: D^{n-1}(\rho)
\longrightarrow \Omega_{pq}M$ be defined by
 \begin{equation}
  \label{eq:fc-one}
  \widetilde{f}_c(v)(t)=
  \left\{
  \begin{array}{ccc}
   \exp_{c(t)}\left(g_v(t)V_v(t)\right)&;& 0 \le t \le \pi/L(c)\\
   c(t) &;&\pi/L(c)\le t \le 1
  \end{array}
  \right.\,.
 \end{equation}
 Here $V_v=V_v(t)$ is the parallel unit vector field along 
 $c$ with $V_v(0)=v/\|v\|, v \in
 D^{n-1}(\rho).$
The function $g_v:[0,1]\longrightarrow (-\pi/2,\pi/2), 
v \in D^{n-1}(\rho)$ is the uniquely determined
smooth function with $
g_v(0)=0$ and $g_v\left(\pi/(2L(c)) \right)=\|v\|$
such that the following holds:
For the standard round metric on $S^n$ let 
$\widetilde{\phi}_c: D^{n-1}(\pi/2)\longrightarrow
\Omega_{pq}S^n$
be the map $\widetilde{f}_c$ introduced above
in Equation~\eqref{eq:fc-one}.
Then for a parametrization $c:[0,1]\longrightarrow S^n$ 
of a part of a great circle on the round sphere $S^n$ 
proportional to arc length 
with $L(c)\ge\pi$ 
the 
curves $t \in \left[0,\pi/L(c)\right] \longmapsto
\widetilde{\phi}_c(v)(t)\in S^n$ 
are half great circles joining $p$ and 
$-p=c\left(\pi/L(c)\right)$
with
$\widetilde{\phi}_c(v)\left(\pi/(2L(c))\right)=
\exp_{c\left(\pi/(2L(c))\right)}
\left(\|v\|V_v(\pi/(2L(c)))\right)$
up to parametrization.
%Here $u_1:=\pi/(2L(c)).$

Let $t \in [0,1]\longrightarrow 
\overline{f}_c(v)(t)\in M$ be the reparametrization
of $t \in [0,1]
\longrightarrow \widetilde{f}_c(v)(t)\in M$
such that the following holds:
$\overline{f}_c(v)(t)=c(t)$ for all $\pi/L(c)\le t\le 1$
and $
t \in [0,L(c)/\pi] \longmapsto
\overline{f}_c(v)(t) \in M$ 
is parametrized proportional to arc length.
Denote by $\Phi^s: \Omega_{pq}M \longrightarrow \Omega_{pq}M, s>0$ the
{\em negative gradient flow} of $E.$ I.e. 
\begin{equation}
 \label{eq:negative-gradient}
 \left.\frac{d \Phi^s(\sigma)}{ds}\right|_{s=t}=-\grad E\left(\Phi^s(\sigma)\right)\,.
\end{equation}
Then finally $f_c(v):=\Phi^1 \left(\overline{f}_c(v)\right)$
and $\phi_c(v):=\Phi^1 \left(\overline{\phi}_c(v)\right)$
\end{definition}
%%%%%%%%%%%%%%%%%%%%%%%%%%%%%%%%%%%%%%%%%%%%%%%%%%%%%%%%%%%%
%%%%%%%%%%%%%
\begin{remark}
\rm
 \label{rem:spherical-trigonometry}
 The formulae from spherical trigonometry imply:
 $$g_{v}(t)=\arctan \left(\sin\left(L(c) t\right) \tan \|v\|\right)\,.$$
\end{remark}
%%%%%%%%%%%%%%%%%%%%%%%%%%%%%%%%%%%%%%%%%%%%%%%%%%%%%%%%%%%%%%%%%%%%%%%%%
It is an observation by Itokawa \& Kobayashi~\cite[p.10--11]{Itokawa2008}
that using a consequence of a Corollary from the so-called
Rauch's second comparison result, 
cf. Section~\ref{sec:morse}
one can compare
the lengths $L\left(\overline{f}_c(v)\right)$
and $L\left(\overline{\phi}_c(v)\right).$
Compare also Corollary~\ref{cor:rauch} which also deals
with the rigidity case.
%%%%%%%%%%%%%%%%%%%%%%%%%%%%%%%%%%%%%%%%%%%%%%%%%%%%%%%%%%%%%%%%%%%%%%%%%
\begin{lemma}
 \label{lem:fcv-properties}
 Let $(M,g)$ be a compact and simply-connected Riemannian manifold
 of positive sectional curvature $K \ge 1$
 and {\em focal radius} $\rho.$
 Let $c:[0,1]\longrightarrow M$ be a geodesic joining
 $p=c(0)$ and $q=c(1)$ with length $L(c)\ge\pi,$
 i.e. $E(c)\ge \pi^2/2.$
Let $f_c: D^{n-1}(\rho)\longrightarrow \Omega_{pq}M$
be the mapping introduced
 in Definition~\ref{def:negative-cycle}.

 (a) The mapping $f_c$  
 satisfies the following property: For all 
 $v \in D^ {n-1}(\rho):$
 \begin{equation}
  \label{eq:fcv-property}
  E\left(f_c(v)\right) \le 
  E\left(f_c(0)\right)=E(c)\,.
 \end{equation}
 
 If $L(c)>\pi$ then there exists $\epsilon >0,$
such that 
 \begin{equation}
   \label{eq:fcv-property1}
   E\left(f_c(v)\right) <
   E\left(f_c(0)\right)=E(c)
  \end{equation}
  for all $v \in D^{n-1}(\rho), v\not=0$
	and 
	\begin{equation}
   \label{eq:fcv-property1a}
   E\left(f_c(v)\right) \le
   E(c)-\epsilon
  \end{equation}
	for all $v \in \partial D^{n-1}(\rho)=
	\{v \in D^{n-1}(\rho)\,;\, \|v\|=\rho.\}.$
	Hence
 $f_c$ defines a relative homology class:
 \begin{equation}
 \label{eq:fc-homology-class}
 \left[f_c\right]\in
 H_{n-1}\left(\Omega_{pq}M,
 \Omega^{\le E(c)-\epsilon}_{pq}M;\z_2\right)\,.
 \end{equation}
 
 (b) If $L(c)\in (\pi,2\pi)$ and
 if $q$ is a regular point for $\exp_p$
 %(i.e. it is not a conjugate point along any geodesic
% between $p$ and $q$) 
 and if $\indo(c) =n-1$ then
there is an $\epsilon>0$ such that for
all $\delta\in(0,\epsilon]:$
\begin{equation}
 \label{eq:fc-homology-class-nonzero}
 0\not=\left[f_c\right]\in
 H_{n-1}\left(\Omega^{\le E(c)+\delta}_{pq}M,
 \Omega^{\le E(c)-\delta}_{pq}M;\z_2\right)\,.
 \end{equation}
% in particular $\ind_{\Omega}(c)=n-1.$
 
 (c)  Let $q$ be a regular point for $\exp_p$
 and let $0\not= h \in H_{n-1}\left(\Omega_{pq}M;\z_2\right)$
 with critical value
 $a:=\cri (h)\in (\pi^2/2,2\pi^2].$ Then there is 
 a geodesic $c \in \Omega_{pq}M$ with
 $E(c)=a, \indo(c)=n-1$ 
 and an $\epsilon>0$ such that
 the mapping $f_c$ defines a non-trivial relative
 homology class
 \begin{equation}
  \label{eq:fc-homology-class-nonzero}
  0\not=\left[f_c\right]\in
  H_{n-1}\left(\Omega_{pq}M,\Omega^{\le a-\delta}_{pq}M;\z_2\right)
  \end{equation}
	for all $\delta \in (0,\epsilon].$
 \end{lemma}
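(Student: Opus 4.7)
The plan is to prove the three parts in order, using Corollary~\ref{cor:rauch} as the main comparison input.

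For part (a), I would first establish the energy bound at the intermediate curve $\overline f_c(v)$. The sphere model $\tilde\phi_c(v)|_{[0,\pi/L(c)]}$ is by construction a half great circle of length exactly $\pi$, so Corollary~\ref{cor:rauch}(a), applied with the parallel frame $V_v$ and the function $g_v$, yields
\begin{equation*}
L_1(v) := L\bigl(\tilde f_c(v)|_{[0,\pi/L(c)]}\bigr) \le \pi.
\end{equation*}
Since $\overline f_c(v)$ is parametrized proportional to arc length on $[0,\pi/L(c)]$ with this length and agrees with $c$ on $[\pi/L(c),1]$, a direct computation gives
\begin{equation*}
E\bigl(\overline f_c(v)\bigr) = \frac{L(c)}{2}\left(L(c) - \pi + \frac{L_1(v)^2}{\pi}\right) \le \frac{L(c)^2}{2} = E(c),
\end{equation*}
and this inequality is preserved by $\Phi^1$. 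For the strict version \eqref{eq:fcv-property1} when $L(c) > \pi$ and $v \neq 0$, the formula in Remark~\ref{rem:spherical-trigonometry} gives $g_v'(\pi/L(c)) = -L(c)\tan\|v\| \neq 0$, so $\overline f_c(v)$ has a genuine corner at $t = \pi/L(c)$, is not a critical point of $E$, and $\Phi^1$ strictly decreases its energy. The uniform estimate \eqref{eq:fcv-property1a} then follows from compactness of $\partial D^{n-1}(\rho)$ and continuity.

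For part (b), the regularity of $q$ for $\exp_p$ makes $c$ non-degenerate, and Section~\ref{sec:morse} identifies the critical group at $c$ as $\z_2[F_c]$. By part (a), $[f_c]$ lies in this group, and the task is to show it is the generator. Using the expansion $g_v(t) \approx \sin(L(c)t)\|v\|$, the differential at the centre is $d\tilde f_c(0)\cdot v = Y_v$ with $Y_v(t) = \sin(L(c)t)P_t(v)$ on $[0,\pi/L(c)]$ and $Y_v(t) = 0$ elsewhere, where $P_t$ is parallel transport along $c$; reparametrization and $d\Phi^1(c)$ only add tangential corrections irrelevant to the Morse-theoretic count. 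A direct Hessian computation yields
\begin{equation*}
d^2E(c)(Y_v, Y_v) = L(c)^2\|v\|^2 \int_0^{\pi/L(c)}\!\!\bigl(\cos^2(L(c)t) - K(Y_v,c')\sin^2(L(c)t)\bigr)\,dt \le 0
\end{equation*}
under $K \ge 1$. Combined with the non-degeneracy of $c$, this forces the $(n-1)$-dimensional image of $df_c(0)$ to meet $V_+$ trivially and therefore to project isomorphically onto the negative subspace $V_-$. The Morse Lemma then homotopes $f_c$, inside the relevant pair, onto the local descending disc representing $[F_c]$, so $[f_c] = [F_c] \neq 0$.

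For part (c), the long exact sequence of the triple $(\Omega_{pq}M, \Omega_{pq}^{\le a-\delta}M, \Omega_{pq}^0 M)$ combined with $\cri(h)=a$ forces $h$ to have non-zero image in $H_{n-1}(\Omega_{pq}M, \Omega_{pq}^{\le a-\delta}M; \z_2)$ for every $\delta \in (0,\epsilon]$. By Morse theory, finitely many geodesics $c_1,\ldots,c_r$ realize the level $a$ with $\indo(c_i)=n-1$, and the critical group $H_{n-1}(\Omega_{pq}^{\le a+\delta}M, \Omega_{pq}^{\le a-\delta}M;\z_2) \cong \bigoplus_i \z_2[F_{c_i}]$ contains a preimage $\bar h = \sum_i \varepsilon_i [F_{c_i}]$ of this image. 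Each $L(c_i)=\sqrt{2a}\in(\pi,2\pi]$, so part (b) identifies $[F_{c_i}] = [f_{c_i}]$. If every $[f_{c_i}]$ vanished in $H_{n-1}(\Omega_{pq}M, \Omega_{pq}^{\le a-\delta}M;\z_2)$, then so would the image of $h$, a contradiction; hence for some $i$ the class $[f_{c_i}]$ is non-trivial, and we take $c := c_i$. The main obstacle is the identification $[f_c] = [F_c]$ in part (b): the Hessian inequality above is not strict (equality holds on the round sphere), so transversality to $V_+$ depends crucially on the non-degeneracy hypothesis $\egono(c)=0$ to exclude the kernel, and the interplay between the reparametrization $\tilde f_c \to \overline f_c$ and the gradient flow $\Phi^1$ has to be handled carefully when computing $df_c(0)$.
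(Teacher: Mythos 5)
Parts (a) and (c) of your argument follow the paper's route closely: Corollary~\ref{cor:rauch} bounds the half-arc by $\pi$ (your explicit energy formula is the computation the paper leaves implicit), the corner at $t_1=\pi/L(c)$ forces strict energy decrease under $\Phi^1$ when $L(c)>\pi$ and $v\ne 0$, and (c) is Morse theory plus the long exact sequence just as in the paper. For part (b) you take a genuinely different route. You compute $d\widetilde{f}_c(0)$ and argue that the index form is $\le 0$ on its image $W$ by the $K\ge 1$ estimate, then use $\egono(c)=0$ and $\indo(c)=n-1$ to force $W\cap V_+ = 0$ and hence $[f_c]=[F_c]$ via the Morse lemma. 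This can be made rigorous, and two of the obstacles you flag are in fact not gaps: the non-strictness of the index-form inequality on $W$ is harmless (any nonzero $w\in W\cap V_+$ would have strictly positive index form, contradicting the bound on $W$, and with $\egono(c)=0$ and $\dim W = n-1 = \indo(c)$ the projection $W\to V_-$ is then an isomorphism), and the arclength reparametrization $\widetilde{f}_c\to\overline{f}_c$ contributes nothing at first order since the speed of $\widetilde{f}_c(v)$ is $L(c)+O(\|v\|^2)$. However, the claim that $d\Phi^1(c)$ ``only adds tangential corrections'' is wrong as stated: $d\Phi^1(c)$ is the time-one map of the linearized negative gradient flow, an isomorphism preserving the eigenspace splitting of the Hessian operator, and it is precisely this eigenspace-preservation (not a tangential decomposition) that makes the sign structure on $W$ survive the passage to $\mathrm{image}(df_c(0))$. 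The paper avoids all of this with a different device: the homotopy $f_{c,u}$, $u\ge\pi/L(c)$, built from the scaling $g_{v,u}(t)=\arctan\!\left(\sin(L(c)t)\tan\|v\|/\sin(L(c)u/2)\right)$. For $u>\pi/L(c)$ the sphere-model second derivative $\partial^2 L(\overline{\phi}_{c,u}(v))/\partial v_i^2|_{v_i=0}=\cos(L(c)u/2)/\sqrt{1-\cos^2(L(c)u/2)}$ is strictly negative, so by Corollary~\ref{cor:rauch} the geodesic $c$ is a \emph{non-degenerate} maximum of $E$ restricted to the image of $\overline{f}_{c,u}$, and the Morse lemma gives $[f_{c,u}]=[F_c]\ne 0$ directly; applying $\Phi^1$ extends the homotopy continuously to $u=\pi/L(c)$, giving $[f_c]=[f_{c,u}]\ne 0$. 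The deformation in $u$ replaces your borderline Hessian with a strictly negative one, which is what makes the paper's identification structural rather than computational.
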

%%%%%%%%%%%%%%%%%%%%%%%%%%%%%%%%%%%%%%%%%%%%%%%%%%%%
\begin{proof}
(a) We denote by $f_c,\widetilde{f}_c,\overline{f}_c: D^{n-1}(\rho)\longrightarrow \Omega_{pq}M$
the mappings introduced in Definition~\ref{def:negative-cycle} for the
Riemannian manifold $(M,g)$ and
by $\phi_c,\widetilde{\phi}_c,\overline{\phi}_c: D^{n-1}(\pi/2)\longrightarrow \Omega_{pq}S^n$
the mappings associated to the standard round metric of constant sectional
curvature $1$ on the $n$-sphere $S^n.$
Hence
$t \in \left[0,\pi/L(c)\right]\longmapsto \widetilde{\phi}_c(v)(t)\in S^n$
is a parametrization of a half great circle and
$t \in \left[0,\pi/L(c)\right]\longmapsto \overline{\phi}_c(v)(t)\in S^n$
is its parametrization proportional to arc length.
Hence $L\left(\widetilde{\phi}_c(v)\right)=
L\left(\overline{\phi}_c(v)\right)=L(c).$
Then 
Corollary~\ref{cor:rauch}
implies:
\begin{eqnarray*}
L\left(\widetilde{f}_c(v)
\left|\left[0,\pi/L(c)\right]\right.\right)=
 L\left(\overline{f}_c(v)
\left|\left[0,\pi/L(c)\right]\right.\right)
 \\
 \le
L\left(\widetilde{\phi}_c(v)
\left|\left[0,\pi/L(c)\right]\right.\right)=
 L\left(\overline{\phi}_c(v)
\left|\left[0,\pi/L(c)\right]\right.\right)=\pi
\end{eqnarray*}
since $K\ge 1.$ 
This shows Equation~\eqref{eq:fcv-property}.
Now assume $L(c)\in (\pi,2\pi].$ Then
for $v \not=0$ the curves 
$\overline{f}_c(v)$ are not smooth at $t_1=\pi/L(c),$
hence 
\begin{equation*}
 E\left(\Phi^1
\left(\overline{f}_c(v)\right)\right)< 
 E\left(\overline{f}_c(v)\right)\le E(c)\,.
\end{equation*}
This proves  Equation~\eqref{eq:fcv-property1}
and Equation~\eqref{eq:fc-homology-class}. 

\smallskip

(b) We define homotopies
$f_{c,u}, \widetilde{f}_{c,u}, \overline{f}_{c,u}: D^{n-1}(\rho) \longrightarrow \Omega_{pq}M$
with $u\ge \pi/L(c)$ 
of the mappings 
$f_c=f_{c,\pi/L(c)}, \widetilde{f}_c
=\widetilde{f}_{c,\pi/L(c)}, 
\overline{f}_c=\overline{f}_{c,\pi/L(c)}$
%resp.
%: D^{n-1}(\rho) \longrightarrow \Omega_{pq}M$ 
resp. homotopies
$\phi_{c,u}, \widetilde{\phi}_{c,u}, \overline{\phi}_{c,u}: D^{n-1}(\rho) \longrightarrow \Omega_{pq}M$
with $u\ge \pi/L(c)$ 
of the mappings
$\phi_c=\phi_{c,\pi/L(c)}, \widetilde{\phi}_c
=\widetilde{\phi}_{c,\pi/L(c)}, 
\overline{\phi}_c=\overline{f}_{c,\pi/L(c)}$
%$\phi_c(v), \overline{\phi}_c(v): D^{n-1}(\pi/2) %\longrightarrow \Omega_{pq}S^n$
as follows:
For $v \in D^{n-1}(\rho), u \in (\pi/L(c),2\pi/L(c))$ let
\begin{equation}
 \label{eq:gvu}
 g_{v,u}(t)=
  \arctan\left(\sin\left(L(c)t\right)
\frac{\tan \|v\|}{\sin \left(L(c)u/2\right)}\right)\,.
\end{equation}
Let
\begin{equation*}
\widetilde{f}_{c,u}(t)=
\left\{
\begin{array}{ccc}
 \exp_{c(t)}\left(g_{v,u}(t)V_v(t)\right) &;& 0\le t \le u/2\\
  \exp_{c(t)}\left(g_{v,u}(u-t)V_v(t)\right) &;& u/2\le t \le u\\
 c(t)&;& u\le t \le 1
\end{array}
\right. \,.
\end{equation*}
Let $t \in [0,u]\longrightarrow 
\overline{f}_{c,u}(v)(t)$ be the 
parametrization of $t \in [0,u]
\longrightarrow \widetilde{f}_{c,u}(v)(t)$ proportional
to arc length. 
We denote by $\overline{\phi}_{c,u}(v), \widetilde{\phi}_{c,u}(v)$
the corresponding mappings for the standard round metric on $S^n.$
Using the formulae from spherical trigonometry one can check
that the function $g_{v,u}(t)$ is the uniquely defined smooth
function such that
$t \in [0,u]\longmapsto 
\widetilde{\phi}_{c,u}(v)(t) =$ $
=\exp_{c(t)}\left(g_{v,u}(t)E_v(t)\right)\in S^n$
is a parametrization of the arc of a great circle
joining $p=c(0)$ and
$\exp_{c(u/2)}\left(\|v\|V_v(u/2)\right).$
From spherical trigonometry we conclude for the length $L(u,v)=L\left(\overline{\phi}_{c,u}(v)\left|
 \left[0,u/2\right]\right.\right)=
  L\left(\overline{\phi}_{c,u}(v)\left|
   \left[u/2,u\right]\right.\right)
 $
of the
great circle arcs:
\begin{equation*}
L(u,v) =
 \arccos \left(\cos\left(L(c)\frac{u}{2}\right)
 \cos\|v\|\right)\,.
\end{equation*}
We compute:
\begin{equation*}
 \left.\frac{\partial^2 L 
\left(\overline{\phi}_{c,u}(v)\right)}{\partial v_i
 ^2}\right|_{v_i=0}=
 \frac{\cos\left(L(c) u/2\right)}{\sqrt{1-\cos^2\left(L(c)u/2\right)}}\,.
\end{equation*}
We conclude that for $u > \pi/L(c)$ the Hessian
$d^2\left(E\circ \overline{\phi}_{c,u}\right)(0)$ at the point 
$0 \in D^{n-1}\left(\pi/2\right)$
is negative definite, i.e. 
$c=\overline{\phi}_{c,u}(0)$ is a non-degenerate maximum point of
the restriction
$E: \overline{\phi}_{c,u}\left(D^{n-1}(\pi/2)\right) \longrightarrow \R.$
Using again Corollary~\ref{cor:rauch} 
we obtain, that
$E\left(\overline{f}_{c,u}(v)\right) 
\le E \left(\overline{\phi}_{c,u}(v)\right)$
for all $u \in (\pi/L(c),2\pi/L(c)),$
$v \in D^ {n-1}(\rho).$
Therefore $c=\overline{f}_{c,u}(0)$ is also
a non-degenerate maximum point of the 
%following 
restriction of
the energy functional 
$E: \overline{f}_{c,u}\left(D^{n-1}(\rho)\right) \longrightarrow \R$ 
to the local $(n-1)$-dimensional submanifold 
$\overline{f}_{c,u}\left(D^{n-1}(\rho)\right).$
Consider
$\Phi^1\circ \overline{f}_{c,u}:D^{n-1}(\rho)\longrightarrow \Omega_{pq}^{\le E(c)}M, u \ge \pi/L(c).$
Then there is $\epsilon >0$ such that
$\Phi^1\left(\overline{f}_{c,u}\left(\partial D^{n-1}(\rho)\right)\right)
\subset\Omega_{pq}^{\le E(c)-\epsilon}M$
for all $u \ge \pi/L(c).$

This shows that 
%there is an 
%$\epsilon >0$ such that 
$f_{c,u}$ defines a non-zero
homology class 
\begin{equation}
\label{eq:fcu-nonzero}
0\not=\left[f_{c,u}\right]\in
H_{n-1}\left(\Omega_{pq}^{\le E(c)+\delta}M,
\Omega_{pq}^ {\le E(c)-\delta}M;\z_2\right)
\end{equation}
for all $\delta \in (0,\epsilon]$
and $u >\pi/L(c).$ This holds by the Morse-Lemma
since $c$ is a non-degenerate critical point of 
$E: \Omega_{pq}^ {\le E(c)+\epsilon}M \longrightarrow \R$ with $\indo c=n-1,$
cf. Section~\ref{sec:morse}.
Define
\begin{equation*}
f_{c,u}:=\Phi^1 \left(\overline{f}_{c,u}\right):
\left(D^{n-1}(\rho),\partial D^{n-1}(\rho)\right)
\longrightarrow 
\left(\Omega_{pq}^{\le E(c)+\delta}M, \Omega_{pq}^{\le E(c)-\delta}M\right)\,.
\end{equation*}
for $u \ge \pi/L(c).$ 
The clue here is that by introducing
the flow $\Phi^1$ this homotopy is also defined
for $u=\pi/L(c).$
Then Equation~\eqref{eq:fcu-nonzero} implies:
\begin{equation}
\label{eq:fcu-nonzero1}
0\not=\left[f_{c}\right]=
\left[f_{c,u}\right]\in
H_{n-1}\left(\Omega_{pq}^{\le E(c)+\delta}M,\Omega_{pq}^ {\le E(c)-\delta}M;\z_2\right)
\end{equation}
for some $u>\pi/L(c).$

\smallskip

(c) 
We conclude from Part (b) and 
Section~\ref{sec:morse}:
There are finitely many geodesics $c_1,\ldots, c_{r}\in \Omega_{pq}M$
with $\indo(c_1)=\ldots=\indo(c_{r})=n-1$ and
$E(c_1)= E(c_2)=\ldots = E(c_{r})=a$
such that the class $h$ as relative class
in $H_{n-1}\left(\Omega_{pq}^{\le a+\delta}M,
\Omega_{pq}^{\le a-\delta}M;\z_2\right)$
for some $\delta>0$
has the following representation:
For sufficiently small
$\epsilon >0$ the 
mappings 
$f_{c_j}: \left(D^ {n-1}(\rho), \partial D^ {n-1}(\rho)\right)\longrightarrow
\left(\Omega_{pq}^ {\le a +\delta}M,\Omega_{pq}^ {\le a -\delta}M\right), j=1,2,\ldots,r$
represent non-trivial relative
homology classes
$0 \not=\left[f_{c_j}\right]
=\left[F_{c_j}\right]
\in H_{n-1}
\left(\Omega_{pq}^ {\le a +\epsilon}M,\right.$ $\left.\Omega_{pq}^ {\le a -\epsilon}M;\z_2\right),
j=1,2,\ldots,r$ 
for all $\delta \in (0,\epsilon].$
Hence 
in $H_{n-1}\left(\Omega_{pq}^{\le a+\delta}M,
\Omega_{pq}^{\le a-\delta}M;\z_2\right)$
we have the following representation:
$h =[f_{c_1}]+\ldots+[f_{c_r}].$
%there is a representative $z=\sum_{j=1}^{r'} %\left[f_{c_j}\right]$
%in the Morse complex representing 
%a non-trivial class in %$H_{n-1}\left(\Omega_{pq}^{\le a}M;\z_2\right)$
Hence the image of $h$ under the homomorphism
$$H_{n-1}
\left(\Omega_{pq}^{\le a+\delta}M;\z_2\right)\longrightarrow 
H_{n-1}
\left(\Omega_{pq}M, 
\Omega_{pq}^{\le a-\delta}M;\z_2 \right)$$
is mapped onto 
$\alpha_*\left(\sum_{j=1}^r
 \left[f_{c_j}\right]\right).$
Here 
$$\alpha_*: H_{n-1}\left(\Omega_{pq}^{\le a+\delta}M,
\Omega_{pq}^ {\le a-\delta}M;
\z_2\right)
\longrightarrow 
H_{n-1}\left(\Omega_{pq}M, \Omega_{pq}^{\le a-\delta}M; \z_2\right)$$
is the homomorphism induced by the inclusion. If
$\alpha_*\left[f_{c_j}\right]=0$ for all $j=1,2,\ldots,r$
then $h$ lies in the image
of the homomorphism
$$ H_{n-1}\left(\Omega^ {\le a-\delta}_{pq}M;
\z_2\right)
\longrightarrow 
H_{n-1}\left(\Omega_{pq}M;\z_2\right),$$
i.e. $\cri(h)\le a-\delta.$ This is a contradiction.
\end{proof}
%%%%%%%%%%%%%%%%%%%%% % % % % % % % % %
% % % % % % % % % % % % % % % % % % % %
\begin{lemma}
\label{lem:path}
Let $(M,g)$ be a compact and simply-connected
Riemannian manifold 
of positive sectional curvature $K\ge 1:$

(a)
Let $\gamma\in \Omega_{qp}M$ be a geodesic
of length $L(\gamma)=\pi$ 
joining $q=\gamma(0)$ and $p=\gamma(1).$ 
If $\gamma $ is not minimal, i.e. if
$d(p,q)<\pi$ then there is a continuous path
$s \in [0,1]\longmapsto \gamma_s \in \Omega_{qp}M$
with $\gamma_0=\gamma, E\left(\gamma_s\right)\le\pi^2/2$
for $s \in (0,1)$ and $E(\gamma_1)<\pi^2/2.$
If $E(\gamma_{s_1})=\pi^2/2,$ then
$E(\gamma_s)=\pi^2/2$ for all $s \in [0,s_1].$

(b) If $c\in \Omega_pM$ is a geodesic loop with
$p=c(0),q=c(1/2)$ and length $L(c)=2\pi$ then
there is a path $s \in [0,1]
\longmapsto c_s\in\Omega_p^{\le 2\pi^2}M$ 
such that the following
holds: For $s \in [0,1/2]$ the curves
$c_s$ are geodesics with
$L(c_s)=2\pi, c_s(1/2)=q.$
And $E(c_s)< 2 \pi^2$ for all $s \in (1/2,1].$
\end{lemma}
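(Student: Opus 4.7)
The plan is to use Morse theory on the path spaces $\Omega_{qp}M$ and $\Omega_pM$ combined with Rauch comparison to control the location of conjugate points. For part (a), I would first observe that since $K \ge 1$ and $L(\gamma) = \pi$, Rauch comparison produces a first conjugate point along $\gamma$ at some parameter $t_0 \in (0, 1]$, and the argument then splits according to whether $t_0 < 1$ (interior conjugate) or $t_0 = 1$ (boundary conjugate).

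The easier subcase is $t_0 < 1$: the Morse index in $\Omega_{qp}M$ satisfies $\indo(\gamma) \ge 1$, so I would choose a variation $W$ along $\gamma$ with $W(0) = W(1) = 0$ and $d^2 E(\gamma)(W, W) < 0$. The perturbation $\exp_\gamma(\epsilon W)$ lies in $\Omega_{qp}M$ and has $E < \pi^2/2$ for small $\epsilon > 0$; composing with the negative gradient flow $\Phi^s$ produces the required path, with trivial plateau.

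The delicate subcase is $t_0 = 1$, where $\indo(\gamma) = 0$ and $\egono(\gamma) \ge 1$, so $\gamma$ is a degenerate local minimum of $E$ to second order. Here I use that $\gamma$ is not globally minimizing: the cut locus from $q$ along $\gamma$ then occurs at some $t_c \in (0, 1)$, and at $t_c$ either $\gamma(t_c)$ is already conjugate to $q$ (reducing to the previous subcase applied to $\gamma|_{[0, t_c]}$), or there is a second minimizing geodesic $\tilde\gamma$ from $q$ to $\gamma(t_c)$. In the second case I would build a continuous plateau of length-$\pi$ broken geodesics interpolating from $\gamma$ to the broken curve $\tilde\gamma * \gamma|_{[t_c, 1]}$, all of energy exactly $\pi^2/2$ when parametrized proportional to arc length, and then smooth the resulting corner at $\gamma(t_c)$ via a short geodesic shortcut inside the convexity radius, producing a smooth curve of length strictly less than $\pi$. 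Ordering the plateau first and the descent afterward yields the monotonicity.

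For part (b), I would take $c_s = c$ constant for $s \in [0, 1/2]$, which trivially meets the geodesic-plus-midpoint condition. For $s \in (1/2, 1]$, I observe that $L(c) = 2\pi$ and $K \ge 1$ force, by Rauch, a first conjugate point along $c$ at parameter $t_0 \in (0, 1/2]$, which is interior to $[0, 1]$; hence $\indo(c) \ge 1$ in $\Omega_pM$. Proceeding as in the easy subcase of part (a), I pick $W$ with $W(0) = W(1) = 0$ and $d^2 E(c)(W, W) < 0$, perturb to $\exp_c(\epsilon W) \in \Omega_p M$ with $E < 2\pi^2$, and apply the negative gradient flow. The main obstacle is the degenerate case of part (a): constructing a plateau of broken geodesics at height $\pi^2/2$ and smoothing it into a strict descent without ever exceeding $\pi^2/2$ requires a careful combination of cut-conjugate locus theory and local convexity.
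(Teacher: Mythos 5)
Your part (b) is correct for the lemma as stated: the Bonnet--Myers/Rauch estimate places a conjugate point at parameter $\le 1/2$, which is interior, so $\indo(c)\ge 1$ and a direct second-variation perturbation followed by the negative gradient flow works, with a trivial plateau. (The paper's proof does more: it keeps $c_s|_{[0,1/2]}=\widetilde{c}|_{[0,1/2]}$ for $s\ge 1/2$, a stronger structural property used later in Lemma~\ref{lem:crifc}, but that is not demanded by the lemma's wording.) Your ``easy subcase'' of part (a) is likewise fine. The genuine gap is in the degenerate subcase of (a), which is precisely where all the work lies.

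There you assert a ``continuous plateau of length-$\pi$ broken geodesics interpolating from $\gamma$ to $\tilde\gamma * \gamma|_{[t_c,1]}$, all of energy exactly $\pi^2/2$.'' This interpolation is not constructed, and there is no reason it should exist. Since the first conjugate point is at $t_0=1$, the cut point $\gamma(t_c)$ is \emph{not} conjugate, so you are in the two-minimizing-geodesics case; but the set of minimal geodesics from $q$ to $\gamma(t_c)$ (the only curves from $q$ to $\gamma(t_c)$ of the minimal length $\ell=t_c\pi$) can be a disconnected, even finite, set. Any continuous family of curves from $q$ to $\gamma(t_c)$ joining $\gamma|_{[0,t_c]}$ to $\tilde\gamma$ must therefore pass through curves of length strictly greater than $\ell$, which pushes the total length above $\pi$ and the energy above $\pi^2/2$. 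So the plateau you describe cannot in general be realized inside $\Omega_{qp}^{\le\pi^2/2}M$.

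The paper avoids this by an entirely different mechanism. It considers the set $A=\{y\in T_qM:\|y\|=\pi,\ \exp_q(y)=p\}$ of initial directions of geodesics of length $\pi$ from $q$ to $p$, and for each $y\in A$ the descending-disc map $f_{c_y}$ of Definition~\ref{def:negative-cycle}. It then uses the \emph{rigidity} part of Corollary~\ref{cor:rauch} to show that if the cycle $f_{c_y}$ fails to descend strictly on a disc of positive radius $\eta_y>0$, then the metric near $c_y$ is locally round and $y$ is an interior point of $A$; so if $\eta_y>0$ held on the whole path-component $B\ni\gamma'(0)$, $B$ would be open and closed in the sphere $T_q^{\pi}M$, forcing $A=T_q^{\pi}M$, i.e.\ $d(q,p)=\pi$, a contradiction. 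Hence some $y\in B$ has $\eta_y=0$; the plateau is then a path of genuine geodesics of length $\pi$ inside $B$ from $\gamma$ to $c_y$, followed by a strict descent via $f_{c_y}$. In short, the paper replaces your cut-locus interpolation by a connectedness/openness dichotomy that hinges on the equality case of Rauch II. Your proposal uses no rigidity statement at all, and without it I do not see how the degenerate case can be closed.
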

%%%%%%%%%%%%%%%%%%%%%%%%%%%%%%%%%%%%%%%%%%%%%%%%%%%%%%%%%%%%
%%%%%%%%%%%%%%%%%%%%%%%%%%%%%%%%%%%%%%%%%%%%%%%%%%%%%%%%%%%%
%%%%%%%%%%%%%%%%%%%%%%%%%%%%%%%%%%%%%%%%%%%%%%%%%%%%%%%%%%%%
\begin{proof}
(a)
Define the subset
\begin{equation*}
A:=\left\{y \in T_qM; \|y\|=\pi,
 \exp_q(y)=p\right\}\,.
\end{equation*}
of the
set $T^{\pi}_q M:=\{y \in T_qM; \|y\|=\pi,\}$
of tangent vectors at $q$ of norm $\pi.$
For $y \in A$ denote by
\begin{equation*}
c_y: [0,1]\longrightarrow M, c_y(t)=\exp_q(ty)
\end{equation*}
the geodesic of length $\pi$ joining
$q=c_y(0)$ and $p=c_y(1)=\exp_q(y).$
For $y \in A$ one can 
consider the mapping $f_{c_y}: D^{n-1}(\rho)
\longrightarrow\Omega_{qp}^{\le \pi^2/2}M$
defined in Definition~\ref{def:negative-cycle},
compare Lemma~\ref{lem:fcv-properties} resp.
Equation~\eqref{eq:fcv-property}.
For $y \in A$ define
\begin{equation*}
\eta_y:=\sup \left\{\eta \in [0,\rho]\,;\,
E\left(f_{c_y}
\left(D^{n-1}(\eta)\right)\right)=
\kappa^2/2\right\}\,.
\end{equation*}
If $\eta_y>0$ for $y \in A$ then the
curves
$t \in [0,1]\longmapsto f_{c_y}(v)(t)\in M$ 
with $\|v\|\le \eta_y$ have energy $\pi^2/2$
and are therefore by construction
of $f_{c_y}$ geodesics. This implies that there
is a neigborhood $U$ of $y$ in $A$ such that
for $z \in U:$
$\eta_z>0.$ Hence $y$ is a point in the interior
$\mathring{A}$ of the set $A.$
By assumption $c'(0) \in A,$ let
$B \subset A$ be the path-component of
$A$ containing $c'(0).$
$B$ is a closed subset of
$T_q^{\pi}M=\{v\in T_qM;\|v\|=\pi\}.$
Now assume that for all $y \in B: \eta_y>0.$
Then the set $B \subset T_q^{\pi}M$ is also
open, i.e. $B=T_q^{\pi}M.$ 
Hence $d(q,p)=\pi$ in contradiction
to the assumption. Therefore there is 
$y \in B$ with $\eta_y=0.$ Then one
can construct a path 
$s \in [0,1]\longrightarrow 
\Omega_{qp}M^{\le \pi^2/2}M$ 
with the following
properties: The restriction
$s \in [0,1/2]
\mapsto \alpha_s\in \Omega_{qp}M$ is
a path consisting of geodesics of length
$\pi$ joining $\alpha_0=c$ and
$\alpha_{1/2}=c_{y},$
i.e. $\alpha'_s(0)\in B.$
And 
$s \in [1/2,1]\mapsto \alpha_s(v)=f_{c_y}(v(s))
\in \Omega_{qp}^{\le \pi^2/2}M$ is a
path satisfying 
$E\left(\alpha_s\right)<\pi^2/2$ 
for all $s\in (1/2,1]$ and a path
$s\in [1/2,1]\mapsto v(s)\in
D^{n-1}(\rho).$
This is possible since $\eta_y=0$
and by the equality discussion in
Corollary~\ref{cor:rauch}.

\smallskip

(b) Let $\delta: T_q^\pi M \times T_q^\pi M
\longrightarrow \R$ be the distance
induced by the Riemannian metric $g_q/\pi^2$
on the sphere $T^{\pi}_qM.$
This is the standard round metric of
sectional curvature $1$ on $T_q^{\pi}M.$
We define the numbers
\begin{equation}
\label{eq:eta}
\theta_{\pm}:=\sup \left\{
\theta \ge 0; \forall w\in T_q^\pi M, 
\delta(\pm c'(1/2), w)\le \theta: \exp_q(w)=p
\right\}\,.
\end{equation}
Then it follows that there are
$y_{\pm}\in T_q^{\pi}M$ with
$\delta(\pm c'(1/2),y_{\pm})=\theta_{\pm}$
and $\eta_{y_{\pm}}=0.$
Then $0 \le \theta_\pm < \pi$ since $d(p,q)<\pi.$
Without restriction we can assume that
$\theta_- \ge \theta_+.$ Then there exists
a geodesic $\widetilde{c} \in \Omega_pM$ with
$\delta(c'(1/2),\widetilde{c}'(1/2)=
\theta_+$ and $\eta(\widetilde{c}'(1/2)=0.$
Then $s \in [0,1/2]\mapsto c_s \in 
\Omega_p^{2\pi^2}M$ 
is a path of geodesics of
length $2\pi$
joining $c_0=c.$ In addition $c_{1/2}=\widetilde{c}$
and $c_s \in \Omega_pM$ for $s \in [1/2,1]$
is a path with $c_s(t)=\widetilde{c}(t)$
for $t \in [0,1/2]$ and $
\gamma_s=c_s \left|[1/2,1]\right.$
is a curve with $E(\gamma_s)<\pi^2/2$
for all $s \in (1/2,1]$
as constructed in part (a).
\end{proof}
% % % % % % % % % % % % % % % % % % % %
% % % % % % % % % % % % % % % % % % % %
% % % % % % % % % % % % % % % % % % % % 
\begin{lemma}
\label{lem:crifc}
Let $\left(M,g\right)$ be a compact 
and simply-connected Riemannian manifold
with positive sectional curvature $K\ge 1$ and
let $c\in \Omega_pM$
be a geodesic loop based at $p$
of length $2\pi,$ resp. energy
$2\pi^2.$ 
For sufficiently small positive $\epsilon>0$
the mapping 
$f_c: D^{n-1}(\rho)\longrightarrow \Omega_pM$
introduced in Definition~\ref{def:negative-cycle}
with $E\left(f_c(v)\right)<2\pi^2$ for all $v \not=0$
defines a relative homology class
\begin{equation}
\label{eq:fc2pi}
\left[f_c\right]\in H_{n-1}\left(\Omega_pM,
\Omega_p^{\le 2\pi-\epsilon}M;\z_2\right).
\end{equation}
If its critical value satisfies
$\cri \left(\left[f_c\right]\right)=2\pi$ then
the Riemannian manifold is a round sphere
of constant sectional curvature $1.$
\end{lemma}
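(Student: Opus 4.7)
The plan is to argue by dichotomy on the distance $d(p, q)$, where $q = c(1/2)$. By Bonnet--Myers the hypothesis $K \geq 1$ forces $d(p, q) \leq \diam(M, g) \leq \pi$. If $d(p, q) = \pi$, then $\diam(M, g) = \pi$, and Toponogov's maximal diameter theorem immediately gives that $(M, g)$ is isometric to the round sphere of sectional curvature $1$, concluding the lemma. So the real work is to rule out the case $d(p, q) < \pi$ by showing it forces $\cri([f_c]) < 2\pi^2$, contradicting the hypothesis.

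Assuming $d(p, q) < \pi$, Lemma~\ref{lem:path}(b) supplies a continuous path $s \in [0, 1] \mapsto c_s \in \Omega_p^{\leq 2\pi^2} M$ with $c_0 = c$, each $c_s$ (for $s \in [0, 1/2]$) a geodesic loop of length $2\pi$ through $q$, and $E(c_s) < 2\pi^2$ for $s \in (1/2, 1]$. Using this path, I would construct a homotopy $F \colon D^{n-1}(\rho) \times [0, 1] \to \Omega_p^{\leq 2\pi^2} M$ with $F(\cdot, 0) = f_c$ and with $F(\cdot, 1)$ mapping into $\Omega_p^{\leq 2\pi^2 - \delta} M$ for some $\delta > 0$. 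For $s \in [0, 1/2]$ set $F(v, s) := f_{c_s}(v)$, obtained by applying Definition~\ref{def:negative-cycle} to the geodesic loop $c_s$; Lemma~\ref{lem:fcv-properties}(a) gives $E(F(v, s)) \leq E(c_s) = 2\pi^2$. For $s \in [1/2, 1]$ define $\widetilde F(v, s)$ by concatenating $\overline{f}_{c_{1/2}}(v)$ restricted to $[0, 1/2]$ with $c_s$ restricted to $[1/2, 1]$ (these agree at $t = 1/2$ at the point $q$), and then set $F(v, s) := \Phi^1(\widetilde F(v, s))$.

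To control the energy in the second piece, split $E(\widetilde F(v, s)) = E(\overline{f}_{c_{1/2}}(v)|_{[0, 1/2]}) + E(c_s|_{[1/2, 1]})$; Corollary~\ref{cor:rauch} bounds the first summand by $\pi^2$, and since $c_s$ coincides with $c_{1/2}$ on $[0, 1/2]$ (contributing $\pi^2$), the second summand equals $E(c_s) - \pi^2 < \pi^2$ for $s > 1/2$. Compactness of $D^{n-1}(\rho)$ then yields a uniform bound $E(F(v, 1)) \leq 2\pi^2 - \delta$, which produces a representative of $[f_c]$ below level $2\pi^2$ and thus the desired contradiction. The main obstacle is the continuity of $F$ at $s = 1/2$ where the two definitions meet; this works because $\overline{f}_{c_{1/2}}(v)|_{[1/2, 1]} = c_{1/2}|_{[1/2, 1]}$ holds by construction in Definition~\ref{def:negative-cycle}, so the two formulas agree at the transition. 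A secondary point is verifying that the energy splitting and the Rauch estimate combine into a uniformly strict decrease rather than a merely pointwise one, which follows from compactness.
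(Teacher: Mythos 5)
Your proposal is correct and mirrors the paper's own argument: both invoke Toponogov's maximal diameter theorem to reduce to the case $d(p,q)<\pi$, apply Lemma~\ref{lem:path}(b) to produce the path $c_s$, and use it to build a homotopy (first over geodesics $c_s$, $s\in[0,1/2]$, then by replacing the second half of the loop) that pushes $f_c$ below level $2\pi^2$ after applying the negative gradient flow $\Phi^1$. The only cosmetic difference is that the paper states the reduction as a contrapositive (non-constant curvature forces $\diam<\pi$) rather than your dichotomy on $d(p,q)$, and you are somewhat more explicit about the continuity at $s=1/2$ and the energy splitting on $[0,1/2]\cup[1/2,1]$, both of which are left implicit in the paper.
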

%%%%%%%%%%%%%%%%%%%%%%%%%%%%%%%%%%%%%%%%%%%%%%%%%%%%%%%%%%%%
%%%%%%%%%%%%%%%%%%%%%%%%%%%%%%%%%%%%%%%%%%%%%%%%%%%%%%%%%%%%
\begin{proof}
If the sectional curvature is not constant, the diameter
$\diam (M,g)$ satisfies
$\diam M < \pi.$ This is a consequence of
Toponogov's maximal diameter 
theorem~\cite[Thm. 6.5]{Cheeger1975} 
resp.~\cite{Toponogov1958}.

Let $q=c(1/2), p=c(0)=c(1).$
Then $d(q,p)\le \diam (M,g)<\pi.$
We conclude from Lemma~\ref{lem:path} that there is 
a path $s \in [0,1] \longmapsto c_s \in \Omega_{p}M$
with $c_0=c, c_{1/2}=\widetilde{c}\in 
\Omega_pM$ a geodesic with
$L(\widetilde{c})=2\pi$
resp. $E(\widetilde{c})=2\pi^2$ and
$E(c_s)<2\pi^2$ for $s\in (1/2,1].$
It follows that for sufficiently small $\epsilon >0$
\begin{equation}
\label{eq:ctilde}
\left[f_c\right]=
\left[f_{\widetilde{c}}\right]
\in 
H_{n-1}\left(\Omega_p M,
\Omega_{p}^{\le 2\pi^2-\epsilon}M;\z_2\right)
\end{equation}
and
$c_s(t)=\widetilde{c}(t)$
for all $s \ge 1/2$ and all
$t \in [0,1/2].$
%c(1/2+2t)$ and $F\left(\alpha_1\right)<2\pi^2/2.$
Then we define a homotopy 
$\left(f_{\widetilde{c}}\right)_s$ of the mapping
$f_{\widetilde{c}}:D^{n-1}(\rho)\longrightarrow
\Omega_p^{\le 2\pi^2/2}M.$
At first we define a homotopy 
$\left(\overline{f}_{\widetilde{c}}\right)_s
 :D^{n-1}(\rho)\longrightarrow
 \Omega_p^{\le 2\pi^2}M, s \in [0,1]$
of the mapping
$\overline{f}_{\widetilde{c}}: D^{n-1}
\longrightarrow \Omega_p^{\le 2\pi^2}M:$ 
\begin{equation*}
\overline{f}_{\widetilde{c},s}(v)(t)=
\left\{
\begin{array}{ccc}
\overline{f}_{\widetilde{c}}(v)(t)&;& 0\le t \le 1/2\\
c_s(1/2+2t)&;& 1/2 \le t \le 1
\end{array}
\right.\,.
\end{equation*}
Then $E\left(\overline{f}_{\widetilde{c},s}(v)
\right)\le 2\pi^2$
for all $s\in [0,1]$ and
$E\left(\overline{f}_{\widetilde{c},s}(v)
\right)< 2\pi^2$
for all $v \in D^{n-1}(\rho), s> 1/2.$
%and
%$E\left(\overline{f}_{c,1}\right)<2\pi^2/2\,.$
%Then
We use the negative gradient flow 
$\Phi^s: \Omega_pM \longrightarrow \Omega_pM, s\ge 0$
of the energy functional to define:
$f_{c,s}(v):=\Phi^1 \left(\overline{f}_{c,s}(v)\right).$
Then $E\left(f_{c,s}(v)\right)\le 2\pi^2$ 
for all $v \in D^{n-1}$ and $s \in [0,1]$
and $E\left(f_{c,s}(v)\right)< 2\pi^2$
for all $v \in D^{n-1}, s> 1/2.$
Hence 
the homotopy 
$f_{c,s},s \in [0,1]$
shows
$\cri \left(\left[f_c\right]\right)=
\cri \left(\left[f_{\widetilde{c}}\right]\right)
<2\pi.$
\end{proof}
%%%%%%%%%%%%%%%%%%%%%%%%%%%%%%%%%%%%%%%%%%%%%%%%%%%%%%%%%%%%
%%%%%%%%%%%%%%%%%%%%%%%%%%%%%%%%%%%%%%%%%%%%%%%%%%%%%%%%%%%%
%%%%%%%%%%%%%%%%%%%%%%%%%%%%%%%%%%%%%%%%%%%%%%%%%%%%%%%%%%%%
\section{Proof of Theorem 1}
\label{sec:proof-theorem-one}
%%%%%%%%%%%%%%%%%%%%%%%%%%%%%%%%%%%%%%%%%%%%%%%%%%%%%%%%%%%
%\begin{proof} of Theorem~\ref{thm:one}
 (a) Lemma~\ref{lem:cri-two-pi} shows that $\crimp \le 2\pi$
for all $p \in M.$ Hence also $\crim \le 2\pi$
by Equation~\eqref{eq:crlp}.
 
 \smallskip
 
 (b) Let $h_p \in H_k\left(\Omega_pM,p;\z_2\right)$
 with
 $\crimp = \cri (h)>\pi.$ 
Lemma~\ref{lem:cri-two-pi}(b) implies 
 $k=n-1.$ Hence the manifold $M$ is $(n-1)$-connected
 and hence by the solution of the
 Poincar\'e conjecture homeomorphic to $S^n\,.$
If $\crim >\pi$ then $\crimp >\pi$ by 
Equation~\eqref{eq:crlp}.
 
 \smallskip
 
 (c)
If $\crim =2\pi$ Equation~\eqref{eq:crlp} and part (a)
imply that $\crimp=2\pi$ for all $p \in M.$
Hence we assume now $\crimp=2\pi$ for some $p \in M.$
 Part (b) implies that 
 $M$ is homeomorphic to $S^n$ and 
 $\crimp=\cri\left(h_p\right),$ where 
 $h_p\in H_{n-1}\left(\Omega_{pq}M;\z_2\right)\cong \z_2$
 is non-zero.
 Choose a sequence $p_j$ of regular points of $\exp_p$ with
 $\lim_{j\to \infty}d(p,p_j)=0,$ 
 cf. for example~\cite[Cor. 18.2]{Milnor1963}.
Then $E: \Omega_{pp_j}M \longrightarrow \R$ 
is a Morse function for any $j \ge 1,$
cf. Section~\ref{sec:morse}.
This holds since $p_j$
as a  regular point of $\exp_p$ is not conjugate to $p$ along any geodesic
joining $p$ and $q.$ 
Using the homotopy equivalence 
$\homo_{p_jp}: \Omega_{p}M \longrightarrow \Omega_{pp_j}M$
introduced in Lemma~\ref{lem:homo} we define
\begin{equation}
 \label{eq:aj}
 a_j:=\cri\left(\left(\homo_{p_jp}\right)_*\left(h_p\right)\right)\,,
\end{equation}
here
$\left(\homo_{p_jp}\right)_*\left(
h_p\right)
\in H_{n-1}\left(\Omega_{pp_j}M;\z_2\right).$

Then Equation~\eqref{eq:crisigma} implies that 
$\lim_{j\to \infty}a_j=2\pi^2$ since 
$p=\lim_{j\to \infty}p_j.$

Morse theory implies that 
for any $j \ge 1$
there is a geodesic 
$c_j\in \Omega_{pp_j}M$
and $\epsilon_j>0$ with $a_j=E(c_j), 
\indo(c_j)=n-1$ 
such that the mapping
\begin{equation}
 \label{eq:fjdn}
 f_j=f_{c_j}: D^{n-1}(\rho)
\longrightarrow \Omega_{pp_j}M
\end{equation}
introduced in Definition~\ref{def:negative-cycle} resp.
Equation~\eqref{eq:fcvdn} satisfies:
$ a_j-\epsilon_j=$\\
$\max\left\{F\left(f_j(v)\right); 
 v \in \partial D^ {n-1}\right\},$
cf. Lemma~\ref{lem:fcv-properties}.
In addition we can assume that for any 
$\delta \in (0,\epsilon_j]$ the relative
homology class
\begin{equation}
 \label{eq:hjnonzero}
0\not=\left[f_j\right] \in H_{n-1}\left(\Omega_{pp_j}M, 
\Omega_{pp_j}^{\le a_j-\delta}M;\z_2\right)
\end{equation}
is non-zero. This follows from Lemma~\ref{lem:fcv-properties} (b) and (c).

A subsequence $\left(c_j\right)_j$ converges to 
a geodesic loop $c \in \Omega_pM,$ 
which we also denote by 
$\left(c_j\right)_j.$
Then we obtain for the maps
$f_j, j\in \n$ and the map
$f_c: D^{n-1}(\rho)\longrightarrow \Omega_p M$
the following statements:
\begin {equation}
 \label{eq:fj-d}
 \lim_{j\to \infty}\sup \left\{
 d\left(f_j(v)(t), f_c(v)(t)\right), t\in [0,1]\right\}=0
\end {equation}
and
\begin {equation}
\label{eq:Lfcj}
 E\left(f_c(v)\right)=\lim_{j\to \infty} E\left(f_j(v)\right)\,.
 \end {equation}
Since 
%\begin{equation}
% \label{eq:epsilon}
$\epsilon':=2\pi -\max\left\{F\left(f_c(v)\right)\,;\, 
 v \in \partial D^ {n-1}\right\}>0
$
% \end{equation}
 by Lemma~\ref{lem:fcv-properties} and 
 $\epsilon' =\lim_{j\to \infty} \epsilon_j$
 we obtain that 
$\epsilon_j\ge \epsilon'/2$ for all $j\ge j_1$
for some $j_1 \in \n.$
 
 Then 
\begin{equation}
 \label{eq:fppj}
 \overline{f}_{\epsilon}=[f_c]=
 \left[\homo_{pp_j}\circ f_j\right]\in H_{n-1}
 \left(\Omega_pM,\Omega_p^{\le 2\pi-\delta/4}M\right)
 \not=0
\end{equation}
for all $\delta \in (0,\epsilon'].$
Equation~\eqref{eq:fj-d} implies that 
$f_c$ and $\homo_{p_jp
}\circ f_j$ are homotopic
for sufficiently large $j,$ which implies
the second equality of Equation~\eqref{eq:fppj}. 
Since Equation~\eqref{eq:fppj} holds for any 
$\delta
\in (0,\epsilon']$ we conclude for the critical
value $\cri \left(\overline{f}_{\epsilon}\right)=2\pi.$ 
Lemma~\ref{lem:crifc} implies that
the Riemannian metric $g$ has constant sectional
curvature. This finishes the proof of
Theorem~\ref{thm:one}.
%\end{proof}
%%%%%%%%%%%%%%%%%%%%%%%%%%%%%%%%%%%%%%%%%%%%%%%%%%%%
\section{Morse condition and critical length}
\label{sec:metric}

Instead of comparing the sectional
curvature $K$ of the Riemannian metric $g$
with the sectional curvature $1$ of the 
standard metric $g_1$ on $S^n$ we can
compare the metrics $g$ and $g_1$ on $S^n.$
For existence results of closed geodesics
on spheres instead of the assumption
$1 \le K \le 4$ several authors considered
the assumption $g_1/4< g< g_1,$ the 
so-called {\em Morse condition,}
cf.~\cite[Sec.3]{BTZ1983a}.
Instead of the lower bound 
$K \ge 1$ for the sectional
curvature in Theorem~\ref{thm:one}
we use the assumption $g \le g_1$ and
obtain the following
%%%%% % % % % % % % % % % % % % % % %
\begin{proposition}
\label{pro:metric}
Let $g$ be a Riemannian metric on the 
$n$-sphere $S^n$ and $g_1$ be the standard 
Riemannian metric of constant sectional curvature
$1$ on $S^n$ such that $g \le g_1.$

\smallskip

(a) 
For all $p \in M:$ $\crisp\le 2 \pi$
and $\cris \le 2\pi.$

\smallskip

(b) If $\cris =2\pi$ then $g=g_1.$
\end{proposition}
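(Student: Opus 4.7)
\medskip

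\noindent\textbf{Part (a).} My plan is to exploit the fact that $g \le g_1$ makes the identity $\mathrm{Id}\colon(S^n,g_1)\to(S^n,g)$ $1$-Lipschitz on each tangent space, so $E_g\le E_{g_1}$ on every $H^1$-curve. This yields sublevel-set inclusions $\{E_{g_1}\le a\}\subseteq\{E_g\le a\}$ in both $\Lambda S^n$ and $\Omega_p S^n$ for every $a\ge 0$. Since the underlying topological spaces and the generators $h \in H_{n-1}(\Lambda S^n,\Lambda^0 S^n;\z_2)$, $h_p\in H_{n-1}(\Omega_p S^n;\z_2)$ are intrinsic, definition~\eqref{eq:critical-value} directly gives $\cri_g(h)\le\cri_{g_1}(h)=2\pi^2$ and $\cri_g(h_p)\le 2\pi^2$, proving $\cris\le 2\pi$ and $\crisp\le 2\pi$.

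\medskip

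\noindent\textbf{Part (b), setup.} Suppose $\cris(S^n,g)=2\pi$. Then~\eqref{eq:crlp} combined with part~(a) forces $\crisp(S^n,g)=2\pi$ for every $p\in S^n$, so it suffices to contradict this from the assumption $g\ne g_1$. I pick $q_0\in S^n$ and a $g_1$-unit $w_0\in T_{q_0}S^n$ with $\alpha:=\sqrt{g_{q_0}(w_0,w_0)}<1$, set $p:=q_0$, and mimic the approximation scheme of Section~\ref{sec:proof-theorem-one}: choose regular values $p_j:=\exp_p(\delta_j w_0)$ of $\exp_p$ on $(S^n,g_1)$ with $\delta_j\searrow 0$. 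On the round sphere the unique geodesic $c_j\in\Omega_{pp_j}$ with $L_{g_1}(c_j)\in(\pi,2\pi)$ is the long-way great-circle arc starting at $p$ in direction $-w_0$, of length $2\pi-\delta_j$ and index $\indo(c_j)=n-1$. Because $c_j'(0)=-(2\pi-\delta_j)w_0$ has $g$-norm $\alpha(2\pi-\delta_j)$ strictly below its $g_1$-norm $2\pi-\delta_j$ and this gap persists on a $t$-interval of size uniform in $j$, there is $\kappa>0$ with $E_g(c_j)\le 2\pi^2-\kappa$ for all large $j$.

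\medskip

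\noindent\textbf{Part (b), cycle and conclusion.} I apply Lemma~\ref{lem:fcv-properties}(c) on $(S^n,g_1)$ to the unique nonzero class $(\homo_{p_jp})_*(h_p)\in H_{n-1}(\Omega_{pp_j}S^n;\z_2)$, whose $g_1$-critical value $a_j:=(2\pi-\delta_j)^2/2\in(\pi^2/2,2\pi^2)$ is realized by $c_j$; this produces $f_{c_j}\colon D^{n-1}(\pi/2)\to\Omega_{pp_j}S^n$, and $[f_{c_j}]$ equals the image of $(\homo_{p_jp})_*(h_p)$ under the quotient map into the $g_1$-relative group $H_{n-1}(\Omega_{pp_j}S^n,\{E_{g_1}\le a_j-\epsilon_0\};\z_2)$ for some $\epsilon_0>0$. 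Lemma~\ref{lem:fcv-properties}(a) on the round sphere gives $E_{g_1}(f_{c_j}(v))<2\pi^2$ strictly for every $v\ne 0$, with a uniform gap $\epsilon_0$ on $\partial D^{n-1}(\pi/2)$. Combining $E_g\le E_{g_1}$ with $E_g(c_j)\le 2\pi^2-\kappa$ at $v=0$, continuity, and compactness on the disc (uniformly in $j$ via $c_j\to c$), I expect $\max_v E_g(f_{c_j}(v))\le 2\pi^2-\eta$ for some $\eta>0$ independent of large $j$. Then $f_{c_j}(D^{n-1}(\pi/2))\subset\{E_g\le 2\pi^2-\eta\}$, so $[f_{c_j}]=0$ in $H_{n-1}(\Omega_{pp_j}S^n,\{E_g\le 2\pi^2-\eta\};\z_2)$; by naturality of the quotient map applied to the inclusion $\{E_{g_1}\le a_j-\epsilon_0\}\subseteq\{E_g\le 2\pi^2-\eta\}$, the image of $(\homo_{p_jp})_*(h_p)$ in the $g$-relative group also vanishes, so $\cri_g((\homo_{p_jp})_*(h_p))\le 2\pi^2-\eta$. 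Lemma~\ref{lem:homo} with $d(p,p_j)\to 0$ then gives $\cri_g(h_p)\le 2\pi^2-\eta<2\pi^2$, contradicting $\crisp(S^n,g)=2\pi$.

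\medskip

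\noindent\textbf{Main obstacle.} The technical heart of the plan is the uniform energy bound $\max_v E_g(f_{c_j}(v))\le 2\pi^2-\eta$ with $\eta>0$ independent of $j$. Near $v=0$ the estimate relies on continuity of $E_g\circ f_{c_j}$ together with the strict bound $E_g(c_j)\le 2\pi^2-\kappa$; away from $v=0$ it relies on the strict gap from Lemma~\ref{lem:fcv-properties}(a) applied on $(S^n,g_1)$. Stitching these two regimes with constants uniform in the approximation parameter $j$ (as $c_j$ converges to the limiting great circle through $p$ in direction $-w_0$) is the main step of the proof.
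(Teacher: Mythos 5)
Your part (a) is exactly the paper's argument. Part (b) is where the two proofs diverge sharply, and where your argument has a real (acknowledged) hole.

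The paper's proof of (b) is a one-step construction: assuming $g\ne g_1$, pick $p$ and $w\in T_{-p}S^n$ with $g(w,w)<g_1(w,w)$, and build the explicit cycle $\Gamma\colon T_p^{2\pi}S^n\cong S^{n-1}\to\Omega_pS^n$ whose member $\Gamma(v)$ is the concatenation of the half great circle from $p$ in direction $v$ with the fixed half great circle from $-p$ to $p$ having tangent $w$ at $-p$. The evaluation map $(t,v)\mapsto\Gamma(v)(t)$ has degree $1$, so $\Gamma$ represents $h_p$. Every $\Gamma(v)$ has $E_{g_1}=2\pi^2$, hence $E_g\le 2\pi^2$; the one smooth curve $\Gamma(w_1)=\gamma_{w_1}$ already has $E_g<2\pi^2$ because it passes through $-p$ in direction $w$; and every other $\Gamma(v)$ is broken, hence not a $g$-critical point, so the $g$-gradient flow $\Phi^1$ strictly lowers its energy. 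Compactness of $S^{n-1}$ then gives $\crisp<2\pi$ immediately, with no Morse theory, no approximation by regular values, and no limiting argument.

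Your route instead imports the Morse-theoretic machinery of Section~\ref{sec:morse} and Lemma~\ref{lem:fcv-properties}, applied on the \emph{round} sphere, together with the approximation scheme of Section~\ref{sec:proof-theorem-one}. This can be pushed through (the limiting map $f_{c_\infty}$ satisfies $E_g(f_{c_\infty}(0))=E_g(c_\infty)<2\pi^2$ by your choice of $p$, and $E_{g_1}(f_{c_\infty}(v))<2\pi^2$ for all $v\ne 0$ since the broken curve is not a $g_1$-geodesic, so compactness of the disc gives a uniform gap that persists under $c_j\to c_\infty$). But you flag this uniform bound as something you ``expect'' and call the ``main step of the proof'' — that is precisely the point where your argument is not actually a proof. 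Proving it requires (i) showing $\overline{f}_{c_j}\to\overline{f}_{c_\infty}$ uniformly in $H^1$ and that $\Phi^1_{g_1}$ passes to the limit (the paper does this work in Section~\ref{sec:proof-theorem-one} in a similar setting), and (ii) carefully stitching the $g$-gap near $v=0$ with the $g_1$-gap away from $v=0$. None of that is needed in the paper's one-paragraph argument: once you have an explicit $S^{n-1}$-cycle whose \emph{every} member is either strictly below level $2\pi^2$ or non-smooth, running the $g$-gradient flow finishes the proof without any $j\to\infty$ limit. In short, the overall strategy in your part (b) is sound, but it replaces a direct two-line cycle construction by the full apparatus of the harder Theorem~\ref{thm:one}, and the key uniformity estimate is asserted rather than established.
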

% % % % % % % % % % % % % % % % % %
% % % % % % % % % % % % % % % % % %
% % % % % % % % % % % % % % % % % %
\begin{proof}
(a) The assumption $g\le g_1$ implies
that for any 
$h \in H_k\left(X;X^{\le b}\right)$
the critical value $\cri(h)$ with respect
to the metric $g$ satisfies
$\cri(h)\le\cri_1(h),$
where $\cri_1(h)$ is the critical value
with respect to the standard metric.
Since $\crisp=\sqrt{2 \cri(h_p)}$
for a generator $h_p \in H_{n-1}\left(\Omega_pS^n;\z_2\right)$
resp.
$\cris=\sqrt{2 \cri(h)}$ for a generator
$h \in H_{n-1}\left(\Lambda S^n,
\Lambda^0S^n;\z_2\right)$
the claim follows.

\smallskip

(b) Assume that for some $p \in S^n$
there is a tangent
vector $w \in T_{-p}S^n$ with $g(w,w)<g_1(w,w).$

Let $T_p^{2\pi}S^n:=\left\{v \in T_pS^n;
g_1(v,v)=4\pi^2\right\}.$
Then we define a map
$\Gamma: v \in T_p^{2\pi}S^n\cong S^{n-1}
\longmapsto \Gamma(v) \in \Omega_pS^n$
as follows:
Let $\gamma_v: [0,1]\longrightarrow S^n$
be the great circle parametrized proportional
to arc length with $p=\gamma_v(0)=\gamma_v(1)$
and $v=\gamma_v'(0).$ In particular 
$\gamma_v(1/2)=-\gamma_v(0)=-p.$
Let $w_1 \in T_pS^n$ be the vector
such that $w=\gamma'_{w_1}(1/2).$
We define
\begin{equation}
\label{eq:Gamma}
\Gamma(v)(t)=\left\{
\begin{array}{ccc}
\gamma_v(t) &;& 0 \le t \le 1/2\\
\gamma_{w_1}(t) &;& 1/2\le t \le 1
\end{array}\right.\,.
\end{equation}

Then the map
$(t,v) \in S^1\times S^{n-1} \longmapsto
\Gamma(v)(t)\in S^n$ defines a homotopically
non-trivial map of degree $1.$
Hence $\Gamma$ represents the 
non-trivial class $h_p.$
Choose the negative gradient flow
$\Phi^s: \Omega_p S^n \longrightarrow \Omega_p S^n$
of the energy functional $E$ with respect to
the metric $g.$
Since 
$E(\Gamma(w_1))=E(\gamma_{w_1})<2\pi^2$
by assumption and
since $\Gamma(v)$ is not smooth for any
$v\not=w_1$ we obtain
$E(\Phi^1\circ\Gamma(v))<2\pi^2$
for all $v \in T_p^{2\pi}S^n.$
Therefore
$\crisp =\sqrt{2 \,\cri(\left[\Phi^1\circ \Gamma\right])}
< 2\pi.$
\end{proof}
% % % % % % % % % % % % % % % % % % %
For a compact Riemannian manifold $(M,g)$
and a point $p \in M$ define
$d_p=\sup \{d(p.q); q \in M\}.$
Then the diameter $\diam$ is given as
$\diam = \sup \{d_p;p\in M\}$ and the
{\em radius} by $\rad = \inf \{d_p;p\in M\}.$
% % % % % % % % % % % % % % % % %
\begin{proposition}
\label{pro:radius}
For a compact Riemannian manifold $(M,g)$
the critical length $\crl_p(M,g)$ at the point
$p$ satisfies
$ \crl_p (M,g) \ge 2 d_p.$ 
In particular we obtain
$\inf \{\crl_p(M,g) ; p \in M\}\ge 2 \rad (M,g).$
\end{proposition}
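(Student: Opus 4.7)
The plan is to exhibit a non-trivial class $h\in H_{k(M)}(\Omega_pM;\field)$ whose critical value satisfies $\cri(h)\ge 2d_p^2$; by definition of $\crimp$ this yields $\crimp\ge 2d_p$, and taking $\inf_p$ on both sides then gives the ``in particular'' statement about $\rad(M,g)$. First I would choose, by compactness of $M$, a point $q\in M$ with $d(p,q)=d_p$, and let $B=\{x\in M\,:\,d(p,x)<d_p\}$ denote the open metric ball of radius $d_p$ around $p$. The key length estimate is that any $\gamma\in\Omega_pM$ with $E(\gamma)<2d_p^2$ has $L(\gamma)\le\sqrt{2E(\gamma)}<2d_p$ by Cauchy--Schwarz, and because $\gamma$ is a loop at $p$,
\[
d(p,\gamma(t))\;\le\;\min\bigl(L(\gamma|_{[0,t]}),\,L(\gamma|_{[t,1]})\bigr)\;\le\;\tfrac{1}{2}L(\gamma)\;<\;d_p
\]
for every $t\in[0,1]$, so $\gamma([0,1])\subset B$ and in particular $q\notin\gamma([0,1])$. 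Consequently
\[
\Omega_p^{<2d_p^2}M\;\subset\;\Omega_pB\;\subset\;\Omega_p\bigl(M\setminus\{q\}\bigr).
\]

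Next, I would argue by contradiction. If $\crimp<2d_p$, then every non-trivial $h\in H_{k(M)}(\Omega_pM;\field)$ would satisfy $\cri(h)<2d_p^2$, so $h$ would be represented by a cycle in $\Omega_p^{<2d_p^2}M$ and hence lie in the image of the inclusion-induced map
\[
\iota_*:\;H_{k(M)}\bigl(\Omega_p(M\setminus\{q\});\field\bigr)\;\longrightarrow\;H_{k(M)}(\Omega_pM;\field).
\]
The task therefore reduces to exhibiting a non-trivial class that is \emph{not} in the image of $\iota_*$. In the homotopy-sphere case $k(M)=n-1$ this is transparent: $M\setminus\{q\}$ is contractible, hence so is $\Omega_p(M\setminus\{q\})$, and therefore $H_{k(M)}(\Omega_p(M\setminus\{q\});\z_2)=0$; since $H_{n-1}(\Omega_pM;\z_2)\cong\z_2$ is non-zero, its generator automatically does the job.

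The main obstacle will be carrying this topological step out for general simply-connected $M$ with $k(M)<n-1$, where $M\setminus\{q\}$ is no longer contractible and $\iota_*$ on $\pi_{k(M)+1}$ is in fact surjective by excision. My strategy would then be to refine to the still smaller subspace $\Omega_pB$ (which suffices by the chain of inclusions above) and compare the path-loop fibrations $\Omega_pX\to P_pX\to X$ with contractible total space (giving $\pi_j(\Omega_pX)\cong\pi_{j+1}(X)$) for $X=M$ and for $X=B$, together with the long exact sequence of the pair $(M,B)$ and relative Hurewicz with $\field$-coefficients. The goal is to detect a class in $\pi_{k(M)+1}(M)$ not captured by maps into the proper open subset $B$, which then transfers to a class in $H_{k(M)}(\Omega_pM;\field)$ outside the image of the refined map $H_{k(M)}(\Omega_pB;\field)\to H_{k(M)}(\Omega_pM;\field)$, completing the contradiction.
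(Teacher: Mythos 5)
Your argument for the case $k(M)=n-1$, i.e.\ for $M$ a homotopy sphere, is correct and complete, and it takes a genuinely different route from the paper's. The paper represents the generator $h_p\in H_{n-1}(\Omega_pM;\z_2)$ by a map $\xi_p\colon S^{n-1}\to\Omega_pM$ and observes that the evaluation map $(t,x)\mapsto\xi_p(x)(t)$ has degree one, hence is surjective; so every representing cycle contains a loop through a farthest point $q$, and such a loop has length $\ge 2d_p$. You argue contrapositively: a cycle of energy $<2d_p^2$ is confined to $\Omega_p(M\setminus\{q\})$, which is contractible, so it cannot carry the generator. The two arguments are dual to one another. Yours has the advantage of working directly with singular cycles in a sublevel set (the paper tacitly replaces a homology class supported in $\Omega_p^{\le a}M$ by a spherical representative there, which needs a word of justification), while the paper's degree argument gives the slightly stronger conclusion that every representative must sweep out all of $M$.

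Concerning your ``main obstacle'': do not try to carry out the strategy you sketch for $k(M)<n-1$, because the inequality is false in that generality. Take $M=S^2\times S^2$ with the product of round metrics. Then $k(M)=1$, and every nonzero class in $H_1(\Omega_pM;\z_2)\cong\z_2\oplus\z_2$ is a sum of the two factor classes, each of which is supported in the sublevel set of energy $2\pi^2$ (push everything into $\Omega_{p_1}^{\le 2\pi^2}S^2\times\{\mathrm{const}\}$, resp.\ its mirror); hence $\crl_p(M,g)\le 2\pi$, whereas $d_p=\sqrt2\,\pi$ and so $2d_p=2\sqrt2\,\pi>2\pi$. This reflects exactly the difficulty you identified: for $k+1<n$ every map $S^{k+1}\to M$ can be pushed off the point $q$ (and here even into the ball $B$), so the homotopy-theoretic obstruction you hope to detect does not exist. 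The paper's own proof is in fact written only for the sphere (it works with $H_{n-1}(\Omega_pS^n;\z_2)$ and a degree-one map onto $S^n$), so the proposition must be read as a statement about homotopy spheres; with that reading, your proof is complete as it stands.
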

% % % % % % % % % % % % % % % % % % % % % %
\begin{proof}
Let $h_p \in H_{n-1}\left(\Omega_pS^n;\z_2\right)$
be the generator, hence 
$\crl_p=\sqrt{2 \cri(h_p)}.$
Choose a map 
$\xi_p : S^{n-1}\longrightarrow \Omega_pS^n$
representing $h_p.$ Then the map
$
(t,x)\in [0,1]/\{0,1\}\times S^{n-1}
\longmapsto \xi_p(x)(t) \in S^n$ is a map
of degree $1,$ in particular the map 
is surjective.
Hence $\sup\left\{L\left(\xi_p(x)\right)\,;\, x \in S^{n-1}\right\}\ge 2d_p.$
\end{proof}
% % % % % % % % % % % % % % % % % % % % % % % % %
\begin{example}
\label{exa:convex}
\rm
One can write the standard metric
$g_1$ on $S^2$ as warped product
$dt^2+\sin^2 t dx^2$ in the coordinates
 $(t,x)\in [0,\pi]\times [0,2\pi]/\{0,2\pi\}
 = [0,\pi]\times \z/(2\pi \z).$
 Denote by $p=(0,x)$ resp. $p'=(\pi,x)$ the 
 coordinate singularities.
Choose a one-parameter family 
%$r\in (0,1]$
of smooth functions
$f_r:[-\pi,\pi] \longrightarrow \R, 
r \in (0,1]$ 
with $f_r'(0)=1,
f_r(\pi/2)= r$ and
$f_1(t)=\sin t, 
f_r(t)=-f_r(-t), 0<f_r(t)\le \sin t,
f_r(\pi/2+t)=f_r(\pi/2-t)$
and $f''_r(t)<0
$ for all $t \in (0,\pi).$

Then the 
warped product metric
$dt^2 +f_r^2(t) dx^2$ on $(0,\pi)\times (0,2\pi)/\{0,2\pi\}$
extends smoothly to a 
Riemannian metric
$g=g_r$ on $S^2.$ 

The Gauss curvature 
$K(t,x)=-f_r''(t)/f_r(t)$ is positive
everywhere. 
%in the coordinate singularity
%at the point $p=(0,x)$ we
%obtain $K(p)=K(0,x)=-f'''(1)/f'(1)>0.$
%with $f(t) \le \sin (t)$ for all $t \in (0,\pi)$
%and $f(\pi/2)< 1.$ 
The surfaces can be seen as
surfaces of revolution generated by a 
one-parameter family of convex
curves 
$t\in [0,2\pi]\longmapsto 
c_r(t)=(x(t),z(t))\in [0,r]\times [-\pi, \pi]$
parametrized by arc length, 
which
intersect at $t=0$ and $t=\pi$ the
axis of revolution orthogonally,
having distance $\le r$ from the 
axis of revolution and
such that $c_1$ is a half-circle.

For $r<1$ we have $g \le g_1$ and
$g \not= g_1.$
Since all geodesics starting from 
the point $p$ are closed and
have length $2\pi$ we obtain
$\crisp=2\pi.$ 
For any other point $q \in S^2$ the 
distance to $p$ or to $p'$ is at least 
$\pi/2.$
Therefore 
$\crl_q(S^2,g_r)
\ge \pi.$
For $t \in [0,\pi]$ the curve
$s \in [0,1] \longmapsto \gamma_t(s)=(t, 2\pi s)$
is a circle of length $2\pi f(t)$
on the surface of revolution.
Hence the map
$t \in \left([0,\pi], \{0,\pi\}\right)\longmapsto \gamma_t \in 
\left(\Lambda S^2, \Lambda^0S^2\right)$
represents the generator
$h\in H_{1}\left(\Lambda S^2, 
\Lambda^0;\z_2\right).$
Therefore $\cris \le 2\pi f(\pi/2)= 2\pi r.$
%for all $q \in S^2.$
Hence we obtain a one-parameter family $g_r, r \in (0,1)$
of convex surfaces $(S^2,g_r)$ of revolution
with $\cri_q(S^2,g_r)\ge \pi$ for all
$q,$ $\cri_p(S^2,g_r)=\cri_{p'}(S^2,g_r)=2\pi,
g\le g_1$ and $\lim_{r\to 0}\cri(S^2,g_r)=0.$
This example shows that in Part (b)
of Proposition~\ref{pro:metric} it
is not sufficient to assume that 
$\crimp =2\pi.$
%The particular form of the metric, i.e. 
%the invariance under the isometric action
%of the group $\mathbb{O}(n)$ implies
%$\lcg=\lglp=\crl_q(S^n,g)=\cris=2\pi f(\pi/2)<2\pi.$
\end{example}

%%%%%%%%%%%%%%%%%%%%%%%%%%%%%%%%%%%%%%%%%%%%%%%%%%%%%%%%%%
\bibliography{ClosedGeodesics}

\begin{thebibliography}{10}

\bibitem{Balacheff2009}
F.~Balacheff, C.~Croke, and M.~Katz.
\newblock A {Z}oll counterexample to a geodesic length conjecture.
\newblock {\em Geom. Funct. Anal.}, 19(1):1--10, 2009.

\bibitem{Ballmann-Habil}
W.~Ballmann.
\newblock {{\"U}ber Geschlossene Geod{\"a}tische, Habilitationsschrift Math.
  Naturwiss. Fak. Univ. Bonn}.
\newblock 1983.

\bibitem{BTZ1983a}
W.~Ballmann, G.~Thorbergsson, and W.~Ziller.
\newblock Existence of closed geodesics on positively curved manifolds.
\newblock {\em J. Differential Geom.}, 18(2):221--252, 1983.

\bibitem{BTZ1983c}
W.~Ballmann, G.~Thorbergsson, and W.~Ziller.
\newblock Some existence theorems for closed geodesics.
\newblock {\em Comment. Math. Helv.}, 58(3):416--432, 1983.

\bibitem{Birkhoff1917}
G.~D. Birkhoff.
\newblock Dynamical systems with two degrees of freedom.
\newblock {\em Trans. Amer. Math. Soc.}, 18(2):199--300, 1917.

\bibitem{Bott-Samelson1958}
R.~Bott and H.~Samelson.
\newblock Applications of the theory of {M}orse to symmetric spaces.
\newblock {\em Amer. J. Math.}, 80:964--1029, 1958.

\bibitem{Cheeger1975}
J.~Cheeger and D.~Ebin.
\newblock {\em Comparison theorems in Riemannian geometry}, volume~9 of {\em
  Math. Libr.}
\newblock North-Holland Publ. Comp.,, Amsterdam, Oxford, 1975, Reprint Amer.
  Math. Soc. 2008.

\bibitem{Eschenburg1987}
J.-H. Eschenburg.
\newblock Comparison theorems and hypersurfaces.
\newblock {\em Manuscripta Math.}, 59(3):295--323, 1987.

\bibitem{Grove1977}
K.~Grove and K.~Shiohama.
\newblock A generalized sphere theorem.
\newblock {\em Ann. of Math. (2)}, 106(2):201--211, 1977.

\bibitem{Hingston1984}
N.~Hingston.
\newblock Equivariant {M}orse theory and closed geodesics.
\newblock {\em J. Differential Geom.}, 19(1):85--116, 1984.

\bibitem{Hingston2013}
N.~Hingston and H.-B. Rademacher.
\newblock Resonance for loop homology of spheres.
\newblock {\em J. Differential Geom.}, 93(1):133--174, 2013.

\bibitem{Itokawa1991}
Y.~Itokawa and R.~Kobayashi.
\newblock The length of the shortest closed geodesics on a positively curved
  manifold.
\newblock {\em Max-Planck-Institute for Mathematics Bonn}, Preprint 1991-43,
  1991.

\bibitem{Itokawa2005}
Y.~Itokawa and R.~Kobayashi.
\newblock The length of the shortest closed geodesics on a positively curved
  manifold.
\newblock {\em arXiv:math/0507489}, July 2005.

\bibitem{Itokawa2008}
Y.~Itokawa and R.~Kobayashi.
\newblock The length of the shortest closed geodesics on a positively curved
  manifold.
\newblock {\em arXiv:math/0805.2793}, May 2008.

\bibitem{Klingenberg1978}
W.~Klingenberg.
\newblock {\em Lectures on closed geodesics}.
\newblock Springer-Verlag, Berlin-New York, 1978.
\newblock Grundlehren der Mathematischen Wissenschaften, Vol. 230.

\bibitem{Klingenberg1995}
W.~Klingenberg.
\newblock {\em Riemannian geometry}, volume~1 of {\em De Gruyter Studies in
  Mathematics}.
\newblock Walter de Gruyter \& Co., Berlin, second edition, 1995.

\bibitem{Lyusternik1951}
L.~A. Lyusternik and A.~I. Fet.
\newblock Variational problems on closed manifolds.
\newblock {\em Doklady Akad. Nauk SSSR (N.S.)}, 81:17--18, 1951 (Russian).

\bibitem{Milnor1963}
J.~Milnor.
\newblock {\em Morse theory}.
\newblock Based on lecture notes by M. Spivak and R. Wells. Annals of
  Mathematics Studies, No. 51. Princeton University Press, Princeton, N.J.,
  1963.

\bibitem{Sugimoto1970}
M.~Sugimoto.
\newblock On {R}iemannian manifolds with a certain closed geodesic.
\newblock {\em T\^ohoku Math. J. (2)}, 22:56--64, 1970.

\bibitem{Toponogov1958}
V.~A. Toponogov.
\newblock Riemannian spaces having their curvature bounded below by a positive
  number.
\newblock {\em Dokl. Akad. Nauk SSSR}, 120:719--721, 1958 (Russian).

\bibitem{Toponogov1959}
V.~A. Toponogov.
\newblock Evaluation of the length of a closed geodesic on a convex surface.
\newblock {\em Dokl. Akad. Nauk SSSR}, 124:282--284, 1959 (Russian).

\end{thebibliography}
\bibliographystyle{plain}
%%%%%%%%%%%%%%%%%%%%%%%%%%%%%%%%%%%%%%%%%%%%%%%%%%%%%%%%%%
%%%%%%%%%%%%%%%%%%%%%%%%%%%%%%%%%%%%%%%%%%%%%%%%%%%%%%
\end{document}